\def\Tr{\text{\rm trace}}
\def\sff{{\sf f}}
\def\sfh{{\sf h}}
\def\sfg{{\sf g}}
\def\sfl{{\sf l}}
\def\hatthetak{\widehat{\hspace{0.4pt}\theta}\hspace{-1.3pt}_\bk}
\def\gmin{g_{\min}}
\def\bX{\boldsymbol{X}}
\def\ds{\displaystyle}
\def\1{\mathbf 1}
\def\RR{\mathbb R}
\def\PP{\mathbb P}
\def\NN{\mathbb N}
\def\ZZ{\mathbb Z}
\def\by{\boldsymbol y}
\def\bx{\boldsymbol x}
\def\oomega{{\boldsymbol\omega}}
\def\E{\mathbb E}
\def\Pb{\mathbf P}
\def\Ex{\mathbf E}
\def\bk{{\boldsymbol k}}
\def\T{\top}
\def\KL{\mathcal K}
\def\bQ{\boldsymbol Q}
\def\ds{\displaystyle}
\def\zb{z_\gamma}
\def\zbstar{z_{\gamma^*}}
\newtheorem{proposition}{Proposition}
\newtheorem{remark}{Remark}
\newtheorem{theorem}{Theorem}
\newtheorem{lemma}{Lemma}
\begin{document}

\title{Tight conditions for consistent variable selection in high dimensional nonparametric regression}

\author{La\"etitia Comminges and Arnak S. Dalalyan\\
Universit\'e Paris Est/ ENPC\\
LIGM/IMAGINE\\
\texttt{\small laetitia.comminges,dalalyan@imagine.enpc.fr}}

\maketitle

\begin{abstract}
We address the issue of variable selection in the regression model with very high ambient dimension, \textit{i.e.}, 
when the number of covariates is very large. The main focus is on the situation where the number of relevant covariates, called 
intrinsic dimension, is much smaller than the ambient dimension. Without assuming any parametric form of the underlying 
regression function, we get tight conditions making it possible to consistently estimate the set of relevant variables. 
These conditions relate the intrinsic dimension to the ambient dimension and to the sample size.  The procedure that is provably 
consistent under these tight conditions is simple and is based on comparing the empirical Fourier coefficients with an 
appropriately chosen threshold value.
\end{abstract}

\parskip=3pt

\section{Introduction}\label{sec:1}

Real-world data such as those obtained from neuroscience, chemometrics, data mining, or sensor-rich 
environments are often extremely high-dimensional, severely underconstrained (few data samples compared 
to the dimensionality of the data), and interspersed with a large number of irrelevant or redundant 
features. Furthermore, in most situations the data is contaminated by noise making it even more difficult 
to retrieve useful information from the data. Relevant variable selection is a compelling approach for 
addressing statistical issues in the scenario of high-dimensional and noisy data with small sample size. 
Starting from \cite{Mallows}, \cite{Akaike,Schwarz} who introduced respectively the famous criteria $C_p$, AIC and 
BIC, the problem of variable selection has been extensively studied in the statistical
and machine learning literature both from the theoretical and algorithmic viewpoints. It appears, however, that 
the theoretical limits of performing variable selection in the context of nonparametric regression are still 
poorly understood, especially in the case where the ambient dimension of covariates, denoted by $d$, is much 
larger than the sample size $n$. The purpose of the present work is to explore this setting under the assumption
that the number of relevant covariates, hereafter called intrinsic dimension and denoted by $d^*$, may grow with 
the sample size but remains much smaller than the ambient dimension $d$.

In the important particular case of linear regression, the latter scenario has been the subject of a number of recent 
studies. Many of them rely on $\ell_1$-norm penalization (as for instance in \cite{Tibsh,Zhao,Meinshausen}) and
constitute an attractive alternative to iterative variable selection procedures proposed by \cite{alquier,Zhang09,Ting} 
and to marginal regression or correlation screening explored in \cite{Wass09,Fan09}. Promising results for feature selection 
are also obtained by minimax concave penalties in \cite{ZnangCH10}, by Bayesian approach in \cite{Scott10} 
and by higher criticism in \cite{Donoho09}. Extensions to other settings including logistic regression, generalized linear 
model and Ising model have been carried out in \cite{Bunea09,Ravik,Fan09}, respectively. Variable selection in the context 
of groups of variables with disjoint or overlapping groups has been studied by~\cite{svssin,Pontil,Obozinski}. Hierarchical 
procedures for  selection of relevant covariates have been proposed by~\cite{Bach09,Bickeletal} and \cite{BinYu09}.

It is now well understood that in the high-dimensional linear regression, if the Gram matrix satisfies some variant of 
irrepresentable condition, then consistent estimation of the pattern of relevant variables---also called the sparsity 
pattern---is possible under the condition $d^*\log (d/d^*)=o(n)$ as $n\to\infty$. Furthermore, it is well known that if 
$(d^*\log (d/d^*))/n$ remains bounded from below by some positive constant when $n\to\infty$, then it is impossible to 
consistently recover the sparsity pattern. Thus, a tight condition exists that describes in an exhaustive manner the
interplay between the quantities $d^*$, $d$ and $n$ that guarantees the existence of consistent estimators. The situation
is very different in the case of non-linear regression, since, to our knowledge, there is no result providing tight
conditions for consistent estimation of the sparsity pattern.  

The papers \cite{LaffertyWasserman} and \cite{BertinLecue}, closely related to the present work, consider the problem of 
variable selection in nonparametric Gaussian regression model. They prove the consistency of the proposed procedures under 
some assumptions that---in the light of the present work---turn out to be suboptimal. More precisely, in \cite{LaffertyWasserman}, the 
unknown regression function is assumed to be four times continuously differentiable with bounded derivatives. The algorithm 
they propose, termed Rodeo, is a greedy procedure performing simultaneously local bandwidth choice and variable selection. Under 
the assumption that the density of the sampling design is continuously differentiable and strictly positive, Rodeo is shown to 
converge when the ambient dimension $d$ is $O({\log n}/{\log \log n})$ while the intrinsic dimension $d^*$ does not increase 
with $n$. On the other hand, \cite{BertinLecue} propose a procedure based on the $\ell_1$-penalization of local polynomial 
estimators and prove its consistency when $d^*=O(1)$ but $d$ is allowed to be as large as $\log n$, up to a multiplicative 
constant. They also have a weaker assumption on the regression function which is merely assumed to belong to the Holder class 
with smoothness $\beta>1$. 

This brief review of the literature reveals that there is an important gap in consistency conditions for the linear regression
and for the non-linear one. For instance, if the intrinsic dimension $d^*$ is fixed, then the condition guaranteeing consistent
estimation of the sparsity pattern is $(\log d)/n\to 0$ in linear regression whereas it is $d=O(\log n)$ in the nonparametric case. 
While it is undeniable that the nonparametric regression is much more complex than the linear one, it is however not easy to find
a justification to such an important gap between two conditions. The situation is even worse in the case where $d^*\to\infty$. 
In fact, for the linear model with at most polynomially increasing ambient dimension $d=O(n^k)$, it is possible to estimate
the sparsity pattern for intrinsic dimensions $d^*$ as large as $n^{1-\epsilon}$, for some $\epsilon>0$. In other words, the 
sparsity index can be almost on the same order as the sample size. In contrast, in nonparametric regression, there is no 
procedure that is proved to converge to the true sparsity pattern when both $n$ and $d^*$ tend to infinity, even if $d^*$  
grows extremely slowly.

In the present work, we fill this gap by introducing a simple variable selection procedure that selects the relevant variables
by comparing some well chosen empirical Fourier coefficients to a prescribed significance level. Consistency of this procedure
is established under some conditions on the triplet $(d^*,d,n)$ and the tightness of these conditions is proved. The main take-away 
messages deduced from our results are the following:
\begin{enumerate}
\item[$\checked$] When the number of relevant covariates $d^*$ is fixed and the sample size $n$ tends to infinity, there exist 
positive real numbers ${c}_*$ and ${c}^*$ such that (a) if\/ $(\log d)/n\le {c}_*$ the estimator proposed in  Section~\ref{sec:3}
is consistent and (b) no estimator of the sparsity pattern may be consistent if\/ $(\log d)/n\ge {c}^*$.
\item[$\checked$] When the number of relevant covariates $d^*$ tends to infinity with $n\to\infty$, then there exist  
real numbers $\underline{c}_i$ and $\bar c_i$, $i=1,\ldots,4$ such that $\underline{c_i}>0$, $\bar c_i>0$ for $i=1,2,3$ 
and (a) if $\underline{c}_1d^*+\underline{c}_2\log d^*+\underline{c}_3\log\log d-\log n
<\underline{c}_4$ the estimator proposed in  Section~\ref{sec:3} is consistent  and (b) no estimator of the sparsity pattern may be consistent 
if $\bar c_1d^*+\bar c_2\log d^*+\bar c_3\log\log d-\log n >\bar c_4$. 
\item[$\checked$] In particular, if $d$ grows not faster than a polynomial in $n$, then there exist 
positive real numbers ${c}_0$ and ${c}^0$ such that (a) if $d^*\le {c}_0\log n$ the estimator proposed in  Section~\ref{sec:3}
is consistent and (b) no estimator of the sparsity pattern may be consistent if $d\ge {c}^0\log n$.
\end{enumerate} 
Very surprisingly, the derivation of these results required from us to apply some tools from complex analysis, such as the 
Jacobi $\theta$-function and the saddle point method, in order to evaluate the number of lattice points lying in a ball of an 
Euclidean space with increasing dimension. 

The rest of the paper is organized as follows. The notation and assumptions necessary for stating our main results are presented in
Section~\ref{sec:2}. In Section~\ref{sec:3}, an estimator of the set of relevant covariates is introduced and its consistency is established.  
The principal condition required in the consistency result involves the number of lattice points in a ball of a high-dimensional Euclidean 
space. An asymptotic equivalent for this number is obtained in Section~\ref{sec:4} via the Jacobi $\theta$-function and the saddle point 
method. Results on impossibility of consistent estimation of the sparsity pattern are derived in Section~\ref{sec:5}, while the relation
between consistency and inconsistency results are discussed in Section~\ref{sec:6}. The technical parts of the proofs are postponed to the 
Appendix.  

\section{Notation and assumptions}\label{sec:2}

We assume that $n$ independent and identically distributed pairs of input-output variables $(\bX_i,Y_i)$, $i=1,\ldots,n$ are 
observed that obey the regression model
$$
Y_i=\sff(\bX_i)+\sigma \varepsilon_i,\qquad i=1,\ldots,n.
$$
The input variables $\bX_1,\ldots,\bX_n$ are assumed to take values in $\RR^d$ while the output variables $Y_1,\ldots,Y_n$ 
are scalar. As usual, the noise $e_1,\ldots,e_n$ is such that $\Ex[\varepsilon_i|\bX_i]=0$, $i=1,\ldots,n$; some additional conditions 
will be imposed later. Without requiring from $\sff$ to be of a special parametric form, we aim at recovering 
the set $J\subset\{1,\ldots,d\}$ of its relevant covariates. 
 
It is clear that the estimation of $J$ cannot be accomplished without imposing some further assumptions on $\sff$ and the 
distribution $P_X$ of the input variables. Roughly speaking, we will assume that $\sff$ is differentiable with a squared 
integrable gradient and that $P_X$ admits a density which is bounded from below. More precisely,  let $\sfg$ denote the 
density of $P_X$ w.r.t.\ the Lebesgue measure. 
\begin{description}
\item[{[C1]}] We assume that $\sfg(\bx)=0$ for any $\bx\not\in[0,1]^d$ and that $\sfg(\bx)\ge{\gmin}$ for any $\bx\in[0,1]^d$.
\end{description}
To describe the smoothness assumption imposed on $\sff$, let us introduce the Fourier basis 
\begin{equation}
\varphi_\bk(\bx)=
\begin{cases}
1, & \bk=0 ,\\
\sqrt{2}\cos(2\pi\,\bk\cdot\bx), & \bk\in(\ZZ^d)_+,\\
\sqrt{2}\sin(2\pi\,\bk\cdot\bx), &-\bk\in(\ZZ^d)_+ ,
\end{cases}
\end{equation}
where $(\ZZ^d)_+$ denotes the set of all $\bk\in\ZZ^d\setminus\{0\}$ such that the first nonzero element of $\bk$ is positive and 
$\bk\cdot\bx$ stands for the the usual inner product in $\RR^d$. In what follows, we use the notation $\langle\cdot,\cdot\rangle$ 
for designing the scalar product in $L^2([0,1]^d;\RR)$, that is $\langle \sfh,\tilde \sfh\rangle=\int_{[0,1]^d} \sfh(\bx)\tilde \sfh(\bx)\,d\bx$ 
for every $\sfh,\tilde \sfh\in L^2([0,1]^d;\RR)$.  Using this orthonormal Fourier basis, we define
$$
\Sigma_L=\bigg\{f: \sum_{\bk\in\ZZ^d} k_j^{2}\langle \sff,\varphi_\bk\rangle^2\le L;\quad\forall j\in\{1,\ldots,d\} \bigg\}.
$$ 
To ease notation, we set $\theta_\bk[\sff]=\langle \sff,\varphi_\bk\rangle$ for all $\bk\in\ZZ^d$. In addition to the smoothness, we 
need also to require that the relevant covariates are sufficiently relevant for making their identification possible. This is done 
by means of the following condition.
\begin{description}
\item[{[C2$(\kappa,L)$]}] The regression function $\sff$ belongs to $\Sigma_L$. Furthermore, for some subset $J\subset\{1,\ldots,d\}$ of 
cardinality $\le d^*$, there exists a function $\bar\sff:\RR^{|J|}\to\RR$  such that 
$\sff(\bx) =\bar\sff(\bx_J)$, $\forall \bx\in\RR^d$ and it holds that 
\begin{equation}\label{ident}
Q_j[\sff]\triangleq\sum_{\bk : k_j\neq 0}\theta_\bk[\sff]^2\geq\kappa,\ \forall j\in J.
\end{equation} 
Hereafter, we will refer to $J$ as the sparsity pattern of $\sff$.
\end{description}
One easily checks that $Q_j[\sff]=0$ for every $j$ that does not lie in the sparsity pattern. This provides a characterization of the
sparsity pattern as the set of indices of nonzero coefficients of the vector $\bQ[\sff]=(Q_1[\sff],\ldots,Q_d[\sff])$.

The next assumptions imposed to the regression function and to the noise require their boundedness in an appropriate sense. 
These assumptions are needed in order to prove, by means of a concentration inequality, the closeness of the empirical 
coefficients to the true ones. 
\begin{description}
\item[{[C3$(L_\infty,L_2)$]}] The $L^\infty([0,1]^d,\RR,P_X)$ and $L^2([0,1]^d,\RR,P_X)$ norms of the function $\sff$ are 
bounded from above respectively by $L_\infty>0$ and $L_2$, \textit{i.e.}, 
$P_X\big({\bx\in[0,1]^d}: |\sff(\bx)|\le L_\infty\big)=1$ and $\int_{[0,1]^d} \sff(\bx)^2\sfg(\bx)\,d\bx\le L_2^2$.
\end{description}
\begin{description}
\item[{[C4]}] The noise variables satisfy a.e. $\Ex[e^{t \varepsilon_i}|\bX_i]\le e^{t^2/2}$ for all $t>0$.
\end{description}
\begin{remark}
The primary aim of this work is to understand when it is possible to estimate the sparsity pattern (with theoretical guarantees on
the convergence of the estimator) and when it is impossible. The estimator that we will define in the next section is intended to 
show the possibility of consistent estimation, rather than being a practical procedure for recovering the sparsity pattern. Therefore, 
the estimator will be allowed to depend on the parameters $\gmin$, $L$, $\kappa$ and $M$ appearing in conditions {\bf [C1-C3]}.
\end{remark}

\section{Consistent estimation of the set of relevant variables}\label{sec:3}

The estimator of the sparsity pattern $J$ that we are going to introduce now is based on the following simple observation: 
if $j\not\in J$ then $\theta_\bk[\sff]=0$ for every $\bk$ such that $k_j\not=0$. In contrast, if $j\in J$ then there exists
$\bk\in\ZZ^d$ with $k_j\not=0$ such that $|\theta_\bk[\sff]|>0$. To turn this observation into an estimator of $J$, we 
start by estimating the Fourier coefficients $\theta_\bk[\sff]$ by their empirical counterparts:
$$
\hatthetak=\frac{1}{n}\sum_{i=1}^n  \frac{\varphi_\bk(\bX_i)}{\sfg(\bX_i)}Y_i,\qquad \bk\in\ZZ^d.
$$
Then, for every $\ell\in\NN$ and for any $\gamma >0$, we introduce the notation  $S_{m,\ell}=\big\{\bk\in\ZZ^d:\ \|\bk\|_{2}\le m,\ \|\bk\|_0\le \ell \big\}$
and $N(d^*,\gamma)=\{\bk\in\ZZ^{d^*}: \|\bk\|_2^2\le \gamma d^*\,\&\, k_1\not=0\}$. Finally our estimator is defined by 
\begin{equation}\label{hatJ}
\widehat{J}_n(m,\lambda)=\Big\{j\in\{1,\ldots,d\}:\ \max_{\bk\in S_{m,d^*}:\,k_j\not=0} |\hatthetak|>{\lambda} \Big\},
\end{equation}
where $m$ and ${\lambda}$ are some parameters to be defined later. 
The notation $a\wedge b$, for two real numbers $a$ and $b$, stands for $\min(a,b)$. 

\begin{theorem}\label{thm1}
Let conditions {\bf [C1-C4]} be fulfilled with some known constants $\gmin,L,\kappa$ and $L_2$. 
Assume furthermore that the design density $\sfg$ and an upper estimate on the noise magnitude $\sigma$ are 
available. Set $m=(2Ld^*/\kappa)^{1/2}$ and $\lambda=4(\sigma+L_2)\big({d^*\log(6md)}/{n\gmin^2})^{1/2}$. If 
\begin{align}
\frac{L_\infty^2 d^*\log(6md)}{n}\leq L_2^2,\quad\text{and}\quad 
\frac{128({\sigma}+L_2)^2d^*N(d^*,2L/\kappa)\log(6md)}{n\gmin^2}&\leq \kappa,
 \label{cond3}
\end{align}
then the estimator $\widehat{J}(m,\lambda)$ satisfies $\Pb\big(\widehat{J}(m,\lambda)\neq J\big) \leq 3(6md)^{-d^*}$. 
\end{theorem}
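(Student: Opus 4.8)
The plan is to bound $\Pb(\widehat J(m,\lambda)\neq J)$ by controlling, uniformly over the relevant index set $S_{m,d^*}$, the deviation of the empirical Fourier coefficients $\hatthetak$ from the true ones $\theta_\bk[\sff]$, and then to show that (i) on the event where all these deviations are below $\lambda/2$ or so, every $j\notin J$ is correctly excluded, and (ii) every $j\in J$ is correctly included because, by condition {\bf [C2$(\kappa,L)$]} together with the choice $m=(2Ld^*/\kappa)^{1/2}$, there must exist some $\bk\in S_{m,d^*}$ with $k_j\neq 0$ and $|\theta_\bk[\sff]|$ comfortably larger than $\lambda$. The first step is to write $\hatthetak-\theta_\bk[\sff]=\frac1n\sum_{i=1}^n\big(\varphi_\bk(\bX_i)Y_i/\sfg(\bX_i)-\theta_\bk[\sff]\big)$, check that each summand is centered (using $\Ex[\varphi_\bk(\bX_i)\sff(\bX_i)/\sfg(\bX_i)]=\theta_\bk[\sff]$ and $\Ex[\varepsilon_i\mid\bX_i]=0$), split it into a bounded-regression-function part and a noise part, and apply a Bernstein- or Hoeffding-type concentration inequality to each. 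Condition {\bf [C1]} ($\sfg\ge\gmin$) controls $1/\sfg$, condition {\bf [C3]} bounds $|\sff|$ by $L_\infty$ and $\|\sff\|_{L^2(P_X)}$ by $L_2$, and condition {\bf [C4]} gives the sub-Gaussian control on $\varepsilon_i$; together these yield a tail bound of the form $\Pb(|\hatthetak-\theta_\bk[\sff]|>t)\le C\exp(-cnt^2\gmin^2/(\sigma+L_2)^2)$ for $t$ in the relevant range, where the first inequality in \eqref{cond3} is precisely what is needed to make the variance term dominate the Bernstein bound (so that the bound is genuinely sub-Gaussian rather than Bernstein-degraded).

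Next I would take a union bound over all $\bk\in S_{m,d^*}$. The cardinality of $S_{m,d^*}$ is at most (number of ways to choose the $\le d^*$ nonzero coordinates) times (number of lattice points of a $d^*$-dimensional ball of radius $m$), and a crude count bounds $|S_{m,d^*}|$ by something like $(6md)^{d^*}$ — this is where the $6md$ in the statement comes from; I would verify the constant $6$ suffices via $\binom{d}{d^*}\le d^{d^*}$ and a volumetric bound on the lattice points. With $\lambda=4(\sigma+L_2)(d^*\log(6md)/(n\gmin^2))^{1/2}$ and $t=\lambda/2$, the per-coordinate probability is at most $\approx(6md)^{-8d^*}$ or so, and after the union bound the total failure probability of the deviation event is $\le C(6md)^{-d^*}$ — one tracks the constants to land on the factor $3$ and the exponent $-d^*$ exactly as stated. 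On the complement of this event, part (i) is immediate since $\theta_\bk[\sff]=0$ whenever $k_j=0$ for all such $\bk$.

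For part (ii), fix $j\in J$. By \eqref{ident}, $\sum_{\bk:k_j\neq0}\theta_\bk[\sff]^2\ge\kappa$; I must show the contribution of $\bk$ with $\|\bk\|_2>m$ is strictly less than $\kappa$, so that a significant mass remains on $\bk\in S_{m,d^*}$ with $k_j\neq0$ (note $\sff$ depends only on the $\le d^*$ coordinates in $J$, so $\|\bk\|_0\le d^*$ is automatic for any $\bk$ with $\theta_\bk[\sff]\neq0$). Here the membership $\sff\in\Sigma_L$ gives $\sum_\bk k_j^2\theta_\bk[\sff]^2\le L$, hence $\sum_{\bk:\|\bk\|_2>m,\,k_j\neq0}\theta_\bk[\sff]^2\le m^{-2}\sum_\bk k_j^2\theta_\bk[\sff]^2\le L/m^2=\kappa/(2d^*)\le\kappa/2$; so at least $\kappa/2$ of the mass sits on $S_{m,d^*}$, and since there are at most $|N(d^*,2L/\kappa)|$ relevant frequencies there (this is the role of the lattice-point count $N(d^*,2L/\kappa)$ entering the second inequality in \eqref{cond3}), some single coefficient has $\theta_\bk[\sff]^2\ge\kappa/(2N(d^*,2L/\kappa))$, i.e. $|\theta_\bk[\sff]|\ge(\kappa/(2N))^{1/2}$. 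The second inequality in \eqref{cond3} is exactly calibrated so that this magnitude exceeds $2\lambda$ (equivalently $\lambda\le\frac12|\theta_\bk[\sff]|$), whence on the good event $|\hatthetak|\ge|\theta_\bk[\sff]|-\lambda/2>\lambda$ and $j\in\widehat J(m,\lambda)$. Combining (i) and (ii), $\widehat J(m,\lambda)=J$ on the good event, which has probability at least $1-3(6md)^{-d^*}$.

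The main obstacle I anticipate is twofold: first, getting the concentration constants sharp enough — one has to handle the product $\varphi_\bk(\bX_i)Y_i/\sfg(\bX_i)$, which is a product of a bounded function, a sub-Gaussian-plus-bounded term, and a term bounded by $1/\gmin$, so the right tool is a Bernstein inequality and the bookkeeping to reduce it to a clean sub-Gaussian form (using the first condition in \eqref{cond3}) is the delicate part. Second, the combinatorial/geometric estimate $|S_{m,d^*}|\le(6md)^{d^*}$ must be done carefully with the constant $6$, since the entire probability bound hinges on it; I expect this to reduce to bounding the number of integer points in a $d^*$-ball of radius $m$ by $(3m)^{d^*}$ or so via a packing argument, times $\binom{d}{d^*}\le d^{d^*}$.
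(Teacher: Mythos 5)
Your proposal is correct and follows essentially the same route as the paper's proof: decompose $\hatthetak$ into the bias term $\tilde\theta_\bk=\frac1n\sum_i\varphi_\bk(\bX_i)\sff(\bX_i)/\sfg(\bX_i)$ and the noise term $z_\bk$, control the first by Bernstein (using the first condition in \eqref{cond3} to make the bound sub-Gaussian, exactly as you anticipate) and the second by the sub-Gaussian assumption {\bf [C4]}, union-bound over $\text{Card}(S_{m,d^*})\le(6md)^{d^*}$, and for $j\in J$ argue that the choice $m^2=2Ld^*/\kappa$ together with $\sff\in\Sigma_L$ leaves at least $\kappa/2$ of the mass of $Q_j$ on $S_{m,d^*}$, so by pigeonhole over the $N(d^*,2L/\kappa)$ admissible frequencies some $|\theta_\bk|\ge(\kappa/2N)^{1/2}\ge2\lambda$, which the second condition in \eqref{cond3} is calibrated to guarantee. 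The only cosmetic differences are that the paper bounds $\text{Card}(S_{m,d^*})$ via $\sum_{i\le d^*}\binom{d}{i}(2m)^i\le3(2md)^{d^*}$ rather than a packing argument, and that the paper tracks the deviation as $\lambda_1+\lambda_2\le\lambda$ (rather than your $\lambda/2$) with the slack coming from $2\sqrt2\sigma+4L_2<4(\sigma+L_2)$; both are constant-level bookkeeping choices within the same proof skeleton.
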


If we take a look at the conditions of Theorem~\ref{thm1} ensuring the consistency of the estimator $\widehat J$, 
it becomes clear that the strongest requirement is the second inequality in (\ref{cond3}). To some extent, this 
condition requires that $(d^*N(d^*,2L/\kappa)\log d)/n$ is bounded from above by some constant. To further analyze 
the interplay between $d^*$, $d$ and $n$ implied by this condition, we need an equivalent to $N(d^*,2L/\kappa)$ as 
the intrinsic dimension $d^*$ tends to infinity. As proved in the next section, $N(d^*,2L/\kappa)$ diverges 
exponentially fast, making inequality (\ref{cond3}) impossible for $d^*$ larger than $\log n$ up to a multiplicative 
constant.

It is also worth stressing that although we require the $P_X$-a.e.\ boundedness of $\sff$ by some constant $L_\infty$, 
this constant is not needed for computing the estimator proposed in Theorem~\ref{thm1}. Only constants related to some quadratic
functionals of the sequence of Fourier coefficients $\theta_\bk[\sff]$ are involved in the tuning parameters $m$ and $\lambda$. This point might 
be important for designing practical estimators of $J$, since the estimation of quadratic functionals is more realistic, see for instance~\cite{LaurentMassart}, than the estimation of $\sup$-norm.  

The result stated above provides also a level of relevance $\kappa$ for the covariates of $\bX$ making their 
identification possible.  In fact, an alternative way of reading Theorem~\ref{thm1} is the following: 
if conditions \textbf{[C1-C4}] and ${L_\infty^2 d^*\log(6md)}\leq {n}L_2^2$ are fulfilled, then 
the estimator $\widehat J(m,\lambda)$---with arbitrary tuning parameters $m$ and $\lambda$---satisfies 
$\Pb(\widehat J(m,\lambda)\not=J)\le 3(6md)^{-d^*}$ provided that the smallest level of relevance $\kappa$ 
for components $X_j$ of $\bX$ with $j\in J$ is not smaller than $8\lambda^2 N(d^*,m^2/d^*)$.

\section{Counting lattice points in a ball}\label{sec:4}

The aim of the present section is to investigate the properties of the quantity $N(d^*,m^2/{d^*})$ that is
involved in the conditions ensuring the consistency of the proposed procedure. Quite surprisingly, the 
asymptotic behavior of $N(d^*,m^2/{d^*})$ turns out to be related to the Jacobi $\theta$-function. In order to
show this, let us introduce some notation. For a positive number $\gamma$, we set
\begin{equation*}
\mathcal{C}_1(d^*,\gamma)=\Big\{\bk \in \ZZ^{d^*} : k_1^{2}+...+k_{d^*}^{2}\leq \gamma d^* \Big\},\quad
\mathcal{C}_2(d^*,\gamma)=\Big\{\bk \in \ZZ^{d^*} :  k_2^{2}+...+k_{d^*}^{2}\leq \gamma d^*\ \&\ k_1=0\Big\}
\end{equation*}
along with $N_1(d^*,\gamma)=\text{Card}\mathcal{C}_1(d^*,\gamma)$ and  $N_2(d^*,\gamma)=\text{Card}\mathcal{C}_2(d^*,\gamma)$. 
In simple words, $N_1(d^*,\gamma)$ is the number of (integer) lattice points lying in the $d^*$-dimensional ball 
with radius $(\gamma d^*)^{1/2}$ and centered at the origin, while $N_2(d^*,\gamma)$ is the number of (integer) lattice 
points with the first coordinate equal to zero and lying in the $d^*$-dimensional ball with radius $(\gamma d^*)^{1/2}$ 
and centered at the origin.  With these notation, the quantity $N(d^*,2L/\kappa)$ of Theorem~\ref{thm1} 
can be written as $N_1(d^*,2 L/{\kappa})-N_2(d^*,2{L}/{\kappa})$. 

In order to determine the asymptotic behavior of $N_1(d^*,\gamma)$  and $N_2(d^*,\gamma)$ when $d^*$ tends to infinity, 
we will rely on their integral representation through Jacobi's $\theta$-function. Recall that the latter is given
by $\sfh(z)=\sum_{r\in\mathbb{Z}}z^{r^{2}}$, which is well defined for any complex number $z$ belonging to the unit ball 
$|z|<1$. To briefly explain where the relation between $N_i(\gamma)$ and the $\theta$-function comes from,  let 
us denote by $\{a_r\}$ the sequence of coefficients of the power series of $\sfh(z)^{d^*}$, that is $\sfh(z)^{d^*}=
\sum_{r\geq 0} a_r z^r$. One easily checks that $\forall r\in \mathbb{N}$, 
$a_r=\text{Card}\{\bk \in \mathbb{Z}^{d^*} : k_1^{2}+...+k_{d^*}^{2}=r\}$. Thus, for every $\gamma$ such 
that $\gamma d^*$ is integer, we have $N_1(d^*,\gamma)=\sum_{r=0}^{\gamma d^*} a_r$. As a consequence of Cauchy's theorem, 
we get :
$$
N_1(d^*,\gamma)=\frac{1}{2\pi i}\oint \frac{\sfh(z)^{d^*}}{z^{\gamma d^*}} \frac{dz}{z(1-z)}.
$$
where the integral is taken over any circle $|z|=w$ with $0<w<1$. Exploiting this representation and applying the saddle-point 
method thoroughly described in~\cite{Dieudonne}, we get the following result.

\begin{proposition}\label{prop:1}
Let $\gamma>0$ be such that $\gamma d^*$ is an integer and let $\sfl_\gamma(z)=\log{\sfh(z)}-\gamma\log z$. 
\setlength{\parskip}{-2pt}
\begin{enumerate}
\setlength{\parskip}{-4pt}
\item  There is a unique solution $\zb$ in $(0,1)$ to the equation $\sfl_\gamma'(z)=0$. Furthermore, 
the function $\gamma\mapsto z_\gamma$ is increasing and $\sfl_\gamma''(z)>0$.

\item The following equivalences hold true:
\begin{align*}
N_1(d^*,\gamma)&=\bigg(\frac{\sfh(z_\gamma)}{z_\gamma^\gamma}\bigg)^{d^*}\frac{1+o(1)}{\zb(1-\zb)(2\sfl_\gamma''(\zb)\pi d^*)^{1/2}},\\ N_2(d^*,\gamma)&=\bigg(\frac{\sfh(z_\gamma)}{z_\gamma^\gamma}\bigg)^{d^*}\frac{1+o(1)}{\sfh(z_\gamma)\zb(1-\zb)(2\sfl_\gamma''(\zb)\pi d^*)^{1/2}},
\end{align*}
as $d^*$ tends to infinity.
\setlength{\parskip}{3pt}
\end{enumerate}
\end{proposition}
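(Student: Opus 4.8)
The plan is to apply the saddle-point method to the contour integral representation of $N_1(d^*,\gamma)$ derived above, and an analogous one for $N_2(d^*,\gamma)$. First I would establish part~1, the existence, uniqueness and monotonicity properties of the saddle point $\zb$. Write $\sfh(z)=1+2\sum_{r\ge 1}z^{r^2}$; on $(0,1)$ this is a positive, strictly increasing, strictly convex function, and $z\sfh'(z)/\sfh(z)=\big(\sum_{r\ge1}2r^2 z^{r^2}\big)/\sfh(z)$ is a ratio whose numerator and denominator are both power series with nonnegative coefficients, so a standard argument (differentiating, or viewing it as the mean of the squared-radius distribution with weights $z^{r^2}$) shows $z\mapsto z\sfh'(z)/\sfh(z)$ is strictly increasing on $(0,1)$, tends to $0$ as $z\downarrow 0$ and to $+\infty$ as $z\uparrow 1$. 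Since $\sfl_\gamma'(z)=\sfh'(z)/\sfh(z)-\gamma/z$, the equation $\sfl_\gamma'(z)=0$ is equivalent to $z\sfh'(z)/\sfh(z)=\gamma$, which therefore has a unique root $\zb\in(0,1)$; monotonicity of the left-hand side in $z$ immediately gives that $\gamma\mapsto\zb$ is increasing, and the same monotonicity statement is exactly $\frac{d}{dz}\big(z\sfl_\gamma'(z)+\gamma\big)>0$, from which $\sfl_\gamma''(\zb)>0$ follows (one checks $\sfl_\gamma''(z)=\frac{d}{dz}\big(\sfh'/\sfh\big)+\gamma/z^2$, and at $z=\zb$ this is positive because $\big(z\sfh'/\sfh\big)'=z\sfl_\gamma''+\text{(something)}$; I would just verify $\sfl_\gamma''>0$ on all of $(0,1)$ directly from convexity-type estimates).

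For part~2, I would take the circle $|z|=\zb$ in the Cauchy integral, parametrize $z=\zb e^{i\varphi}$, and write
$$
N_1(d^*,\gamma)=\frac{1}{2\pi}\int_{-\pi}^{\pi}\exp\!\big(d^*\sfl_\gamma(\zb e^{i\varphi})\big)\,\frac{d\varphi}{1-\zb e^{i\varphi}}.
$$
The phase $\sfl_\gamma(\zb e^{i\varphi})$ has, by construction, vanishing first derivative in $\varphi$ at $\varphi=0$, and second derivative $-\zb^2\sfl_\gamma''(\zb)<0$ there (the sign coming from $\sfl_\gamma''(\zb)>0$), so $\varphi=0$ is a nondegenerate maximum of $\mathrm{Re}\,\sfl_\gamma(\zb e^{i\varphi})$. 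The saddle-point/Laplace expansion then gives a main term
$$
N_1(d^*,\gamma)=\frac{e^{d^*\sfl_\gamma(\zb)}}{1-\zb}\cdot\frac{1}{\sqrt{2\pi d^*\,\zb^2\sfl_\gamma''(\zb)}}\,\big(1+o(1)\big),
$$
and since $e^{\sfl_\gamma(\zb)}=\sfh(\zb)/\zb^{\gamma}$ and $\frac{1}{1-\zb}\cdot\frac{1}{\zb\sqrt{\dots}}$ matches the claimed prefactor $\frac{1}{\zb(1-\zb)(2\sfl_\gamma''(\zb)\pi d^*)^{1/2}}$, this is exactly the asserted equivalence. For $N_2(d^*,\gamma)$ I would note that fixing $k_1=0$ amounts to replacing one factor $\sfh(z)$ by $1$: the generating function becomes $\sfh(z)^{d^*-1}$ against the same $z^{\gamma d^*}$, i.e.
$$
N_2(d^*,\gamma)=\frac{1}{2\pi i}\oint\frac{\sfh(z)^{d^*-1}}{z^{\gamma d^*}}\,\frac{dz}{z(1-z)}
=\frac{1}{2\pi i}\oint\frac{\sfh(z)^{d^*}}{z^{\gamma d^*}}\,\frac{dz}{\sfh(z)\,z(1-z)}.
$$
Running the same saddle-point analysis with the extra slowly varying factor $1/\sfh(z)$ evaluated at the (same) saddle $\zb$ yields the second formula, with the additional $1/\sfh(\zb)$ in the denominator; the saddle is unchanged because the exponentially large part of the integrand, $\sfh(z)^{d^*}/z^{\gamma d^*}$, is identical, and $1/\sfh(z)$ only contributes to the $1+o(1)$ correction at leading order.

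The main obstacle is the rigorous justification of the saddle-point approximation, i.e.\ controlling the integral away from $\varphi=0$. One must show $\mathrm{Re}\,\sfl_\gamma(\zb e^{i\varphi})<\sfl_\gamma(\zb)$ for all $\varphi\neq 0$, with a quantitative gap, so that the tails $|\varphi|>\delta$ contribute only an exponentially smaller amount, while on $|\varphi|\le\delta$ the Taylor expansion of the phase is controlled uniformly; this is where the structure of $\sfh$ enters, since $|\sfh(\zb e^{i\varphi})|<\sfh(\zb)$ unless $\varphi=0$ (the inequality being strict because not all exponents $r^2$ share a common divisor forcing periodicity). I expect this to be the part requiring care, and it is presumably the content deferred to the Appendix; the rest is a bookkeeping exercise identifying the constants, which I would not grind through here. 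The arguments in \cite{Dieudonne} on the saddle-point method provide the template for making the tail estimate and the local expansion precise.
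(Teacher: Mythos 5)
Your plan matches the paper's proof essentially line for line. For part~1 the paper substitutes $z=e^{-y}$ and shows the function $\phi(y)=e^{-y}\sfh'(e^{-y})/\sfh(e^{-y})$ is strictly decreasing via Cauchy--Schwarz, which is the same as showing $z\mapsto z\sfh'(z)/\sfh(z)$ is strictly increasing, as you do; the identity $\sfl_\gamma''(\zb)=-\zb^{-2}\phi'(y_\gamma)$ then gives positivity at the saddle point (the paper only verifies $\sfl_\gamma''(\zb)>0$, not on all of $(0,1)$, and only the value at $\zb$ is used). For part~2 the paper takes exactly the circle $|z|=\zb$, invokes the saddle-point formula (1.8.1) of Dieudonn\'e, and the tail estimate you flag is verified by bounding $|\sfh(\zb e^{i\omega})|$ using that $1+z$ contributes the strictness; the $N_2$ term is handled, as you say, by the same contour integral with one fewer factor of $\sfh$, which is slowly varying and evaluated at $\zb$.
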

In the sequel, it will be useful to remark that the second part of Proposition~\ref{prop:1} yields
\begin{align}\label{eq:log}
\log \big(N_1(d^*,\gamma)-N_2(d^*,\gamma)\big) & = d^*\sfl_\gamma(z_\gamma)-\frac12\log d^*-
                                         \log\Bigg\{\frac{\sfh(\zb)\zb(1-\zb)(2\sfl_\gamma''(\zb)\pi)^{1/2}}{\sfh(\zb)-1}\Bigg\}+o(1).
\end{align}
In order to get an idea of how the terms $\zb$ and $\sfl_\gamma(\zb)$ depend on $\gamma$, we depicted in Figure~\ref{fig:1} the plots of these quantities as functions of $\gamma>0$.
\begin{figure}[ht]
\includegraphics[width=0.99\textwidth]{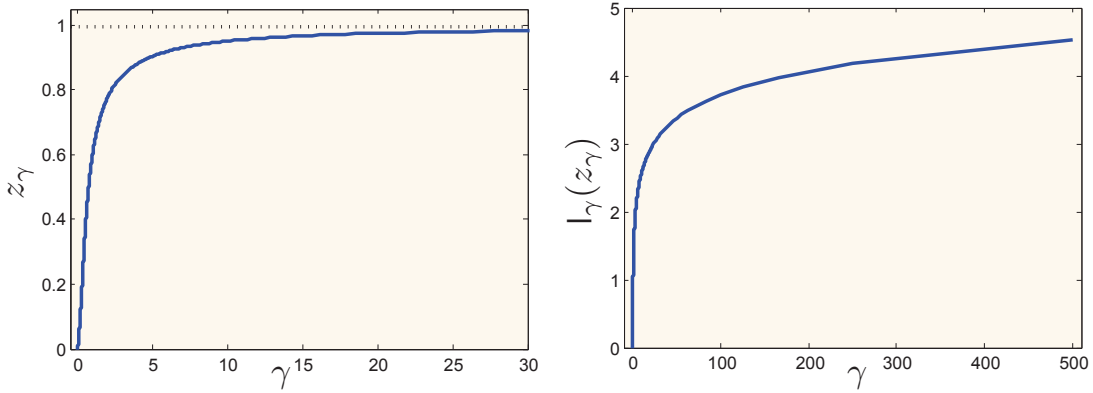}
\vspace{-5pt}
\caption{The plots of mappings $\gamma\mapsto z_\gamma$ and $\gamma\mapsto \sfl_\gamma(\zb)$.}
\label{fig:1}
\end{figure}

\section{Tightness of the assumptions}\label{sec:5}

In this section, we assume that the errors $\varepsilon_i$ are i.i.d.\ Gaussian with zero mean and variance $1$ and 
we focus our attention on the functional class $\widetilde\Sigma(\kappa,L)$ of all functions satisfying assumption 
\textbf{[C2($\kappa,L$)]}.  In order to avoid irrelevant technicalities and to better convey the main results, we assume
that $\kappa=1$ and denote $\widetilde\Sigma_L=\widetilde\Sigma(1,L)$. Furthermore, we will assume that the design 
$\bX_1,\ldots,\bX_n$ is fixed and satisfies 
\begin{equation}\label{orth}
\frac{1}{n}\sum_{i=1}^n\varphi_\bk(\bX_i)\varphi_{\bk'}(\bX_i)\leq \frac{n}{N_1(d^*,L)^2}  
\end{equation}  
for all distinct $\bk$, ${\bk'}\in S_{(d^*L)^{1/2},d^*}\subset\ZZ^d$.  The goal in this section is to provide conditions
under which the consistent estimation of the sparsity support is impossible, that is 
there exists a positive constant $c>0$ and an integer $n_0\in\NN$ such that, if $n\ge n_0$,
$$
\inf_{\widetilde{J}}\sup_{\sff\in \widetilde\Sigma_L} \Pb_\sff(\widetilde{J}\neq J_\sff)\geq c,
$$ 
where the $\inf$ is over all possible estimators of $J_\sff$. To lower bound the LHS of the last inequality, we
introduce a set of $M+1$ probability distributions $\mu_0,\ldots,\mu_{M}$ on $\tilde\Sigma_L$ and use the fact that 
\begin{equation}\label{minor:1}
\inf_{\widetilde{J}}\sup_{\sff\in \widetilde\Sigma_L} \Pb_\sff(\widetilde{J}\neq J_\sff)
\ge \inf_{\widetilde{J}}\frac1{M+1}\sum_{\ell=0}^M\int_{\widetilde\Sigma_L} \Pb_\sff(\widetilde{J}\neq J_\sff)\,\mu_\ell(d\sff).
\end{equation}
These measures $\mu_\ell$ will be chosen in such a way that for each $\ell\ge 1$ there is a set $J_\ell$ of cardinality $d^*$ 
such that $\mu_\ell\{J_\sff=J_\ell\}=1$ and all the sets $J_1,\ldots,J_M$ are distinct. The measure $\mu_0$ is the Dirac measure in 
$0$. Considering these $\mu_\ell$s as ``prior'' probability measures on $\tilde\Sigma_L$ and 
defining the corresponding ``posterior'' probability measures $\PP_0,\PP_1,\ldots,\PP_M$  by 
$$
\PP_\ell(A)=\int_{\tilde\Sigma_L} \Pb_\sff(A)\,\mu_\ell(d\sff),\quad \text{for every measurable set } A\subset\RR^n,
$$ 
we can write the inequality (\ref{minor:1}) as
\begin{equation}\label{minor:2}
\inf_{\widetilde{J}}\sup_{\sff\in \widetilde\Sigma_L} \Pb_\sff(\widetilde{J}\neq J_\sff)
\ge \inf_{\psi}\frac1{M+1}\sum_{\ell=0}^M \PP_\ell(\psi\not=\ell),
\end{equation}
where the $\inf$ is taken over all random variables $\psi$ taking values in $\{0,\ldots,M\}$. The latter $\inf$ will be controlled using 
a suitable version of the Fano lemma, see~\cite{Fano}. In what follows, we denote by $\KL(P,Q)$ the Kullback-Leibler divergence between two
probability measures $P$ and $Q$ defined on the same probability space.
\begin{lemma}[Corollary 2.6 of \cite{Tsybakov09}]\label{lem:fano}
Let $(\mathcal X,\mathcal A)$ be a measurable space and let $P_0,\ldots,P_M$ be probability measures on $(\mathcal X,\mathcal A)$. 
Let us set $\bar{p}_{e,M}=\inf_\psi (M+1)^{-1}\sum_{\ell=0}^M P_\ell\big(\psi\not= \ell\big)$ 
where the $\inf$ is taken over all measurable functions $\psi:\mathcal X\to\big\{0,\ldots,M\big\}$. If for some $0<\alpha< 1$
$$
\frac{1}{M+1}\sum_{\ell=0}^{M}\KL\big(P_\ell,P_0\big)\leq \alpha \log M,
$$
then 
$$ 
\bar{p}_{e,M}\geq \frac{\log (M+1)-\log 2}{\log M}-\alpha.
$$
\end{lemma}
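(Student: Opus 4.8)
The plan is to read the quantity $\bar p_{e,M}$ as the Bayes error of a multiple testing problem with a uniform prior and to apply Fano's inequality in its information-theoretic form. Let $\Theta$ be a random index, uniformly distributed on $\{0,\ldots,M\}$, and let $X$ be distributed according to $P_\ell$ conditionally on $\{\Theta=\ell\}$; write $\bar P=(M+1)^{-1}\sum_{\ell=0}^M P_\ell$ for the marginal law of $X$ and $I(\Theta;X)$ for the mutual information between $\Theta$ and $X$. For any measurable $\psi:\mathcal X\to\{0,\ldots,M\}$, the Markov chain $\Theta\to X\to\psi(X)$ and the data-processing inequality give $H(\Theta\mid X)\le H(\Theta\mid\psi(X))$, and Fano's inequality applied to the pair $(\mathbf{1}\{\psi(X)\neq\Theta\},\Theta)$ yields
$$
h_b\big(\Pr(\psi(X)\neq\Theta)\big) + \Pr(\psi(X)\neq\Theta)\,\log M \ \ge\ H(\Theta\mid X) \ =\ H(\Theta) - I(\Theta;X),
$$
where $h_b(t)=-t\log t-(1-t)\log(1-t)$ is the binary entropy. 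Under the uniform prior $H(\Theta)=\log(M+1)$, and averaging over $\Theta$ turns $\Pr(\psi(X)\neq\Theta)$ into $(M+1)^{-1}\sum_{\ell=0}^M P_\ell(\psi\neq\ell)$. Since $h_b\le\log 2$ and $\psi\equiv 0$ is admissible (so the infimum is finite), taking the infimum over $\psi$ gives
$$
\log 2 + \bar p_{e,M}\log M \ \ge\ \log(M+1) - I(\Theta;X).
$$

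The next step is to bound the mutual information by the hypothesis of the lemma. By definition $I(\Theta;X)=(M+1)^{-1}\sum_{\ell=0}^M \KL(P_\ell,\bar P)$, and for every probability measure $Q$ one has the identity $(M+1)^{-1}\sum_{\ell=0}^M \KL(P_\ell,Q)=I(\Theta;X)+\KL(\bar P,Q)$, so that $\bar P$ minimizes the left-hand side. Choosing $Q=P_0$ therefore gives
$$
I(\Theta;X)\ \le\ \frac{1}{M+1}\sum_{\ell=0}^M \KL(P_\ell,P_0).
$$
(If some $\KL(P_\ell,P_0)=+\infty$ this is vacuous, but then the assumption $(M+1)^{-1}\sum_\ell\KL(P_\ell,P_0)\le\alpha\log M$ already fails and there is nothing to prove.) Substituting the hypothesis into the previous display yields $\log 2 + \bar p_{e,M}\log M \ge \log(M+1)-\alpha\log M$, and dividing by $\log M$ gives exactly $\bar p_{e,M}\ge \dfrac{\log(M+1)-\log 2}{\log M}-\alpha$.

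The only genuinely delicate ingredient is Fano's inequality itself; I would derive it in the standard way from the chain rule for conditional entropy, $H(\Theta,E\mid X)=H(E\mid X)+H(\Theta\mid E,X)$ with $E=\mathbf{1}\{\psi(X)\neq\Theta\}$, bounding $H(E\mid X)\le\log 2$ and $H(\Theta\mid E,X)\le \Pr(E=1)\log M$. A secondary point requiring a line of care is the measure-theoretic setting: when the $P_\ell$ are not dominated by a common $\sigma$-finite measure the mutual information and the KL divergences are still well defined via Radon–Nikodym derivatives on the relevant absolutely continuous parts, and one may reduce to the dominated case (e.g.\ by dominating everything by $\bar P$) whenever the divergences in the hypothesis are finite, which is all that matters. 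Everything else — tracking $H(\Theta)=\log(M+1)$ and the bound $h_b\le\log 2$ — is routine bookkeeping.
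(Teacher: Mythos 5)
Your proof is correct. Note that the paper does not actually prove this lemma — it is cited verbatim as Corollary~2.6 of \citet{Tsybakov09} — so there is no in-paper argument to compare against. Your derivation is the standard information-theoretic route: uniform prior on $\{0,\ldots,M\}$, Fano's inequality $H(\Theta\mid X)\le h_b(P_e)+P_e\log M$ combined with data processing, then the variational identity $\tfrac{1}{M+1}\sum_\ell\KL(P_\ell,Q)=I(\Theta;X)+\KL(\bar P,Q)$ to pass from the mutual information to the average divergence against $P_0$. The bookkeeping (the $\log(M+1)$ from $H(\Theta)$, the $\log 2$ from $h_b\le\log 2$, dividing by $\log M$) lands exactly on the stated bound. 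The only implicit requirement worth flagging is $M\ge 2$, so that $\log M>0$ and the division is legitimate; this is already implicit in the statement since $\log M$ appears in the denominator of the conclusion. Your remark on the undominated case is also handled appropriately: one can always dominate by $\bar P$, and if any $\KL(P_\ell,P_0)=\infty$ the hypothesis is vacuous.
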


It follows from this lemma that one can deduce a lower bound on $\bar p_{e,M}$, which is the quantity we are interested in, 
from an upper bound on the average Kullback-Leibler divergence between the measures $\PP_\ell$ and $\PP_0$. This roughly means 
that the measures $\mu_\ell$ should not be very far from $\mu_0$ but the probability measures $\mu_\ell$ should be very different 
one from another in  terms of the sparsity pattern of a function $\sff$ randomly drawn according to $\mu_\ell$. This property is 
ensured by the following result.

\begin{lemma}\label{lem:3}
Suppose  $\mu_0=\delta_0$, the Dirac measure at\/ {\sf 0}\/$\in\Sigma_L$. Let $S$ be a subset of\/ $\ZZ^{d}$ of cardinality ${|S|}$ and $A$ 
be a constant.  Define $\mu_S$ as a discrete measure supported on the finite set of functions 
$\{\sff_\oomega = \sum_{\bk\in S}A\omega_\bk\varphi_\bk : \oomega\in\{\pm1\}^S\}$ such that $\mu_S(\sff=\sff_\oomega)=2^{-{|S|}}$ 
for every $\oomega\in\{\pm1\}^S$, \textit{i.e.}, the $\omega_\bk$'s are i.i.d.\  Rademacher random variables under 
$\mu_S$. If, for some $\epsilon\geq 0$, the condition 
$$
\frac{1}{n}\sum_{i=1}^n\varphi_\bk(\bX_i)\varphi_{\bk'}(\bX_i)\leq \epsilon\qquad \forall \bk, {\bk'} \in S
$$
is fulfilled, then
$$
\KL(\PP_1,\PP_0)\le\log\bigg[\int\Big(\frac{d\PP_1}{d\PP_0}\,(\by)\Big)^2\PP_0(d\!\by)\bigg]  \leq 
{4|S|A^4n^2}\Big\{1+\frac{|S|\epsilon}{4nA^2}\Big\}.
$$
\end{lemma}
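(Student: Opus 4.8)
The plan is to compute the chi-square divergence $\chi^2(\PP_1,\PP_0)=\int (d\PP_1/d\PP_0)^2\,d\PP_0-1$ explicitly using the Gaussian structure, then bound $\KL$ by $\log(1+\chi^2)$. First I would write down the likelihood ratio. Under $\PP_0$ the data is $\by=\sigma\eeps$ with $\eeps\sim\mathcal N(0,I_n)$ (here $\sigma=1$ since we assumed unit variance in Section~\ref{sec:5}), and under $\mu_S$, conditionally on $\oomega$, we have $\by=\sff_\oomega(\bX)+\eeps$ where $\sff_\oomega(\bX)=(\sff_\oomega(\bX_1),\ldots,\sff_\oomega(\bX_n))^\T$. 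Hence
\begin{equation*}
\frac{d\PP_1}{d\PP_0}(\by)=\Ex_\oomega\Big[\exp\Big(\sum_{i=1}^n y_i\sff_\oomega(\bX_i)-\tfrac12\sum_{i=1}^n\sff_\oomega(\bX_i)^2\Big)\Big],
\end{equation*}
where $\Ex_\oomega$ averages over the i.i.d.\ Rademacher vector $\oomega$. Squaring and taking $\Ex_{\PP_0}$ introduces an independent copy $\oomega'$ of $\oomega$; using $\Ex_{\by\sim\PP_0}\exp(\sum_i y_i v_i)=\exp(\tfrac12\sum_i v_i^2)$ with $v_i=\sff_\oomega(\bX_i)+\sff_{\oomega'}(\bX_i)$, the quadratic terms in each $\sff_\oomega(\bX_i)^2$ and $\sff_{\oomega'}(\bX_i)^2$ cancel against the $-\tfrac12$ corrections, leaving
\begin{equation*}
\chi^2(\PP_1,\PP_0)+1=\Ex_{\oomega,\oomega'}\exp\Big(\sum_{i=1}^n \sff_\oomega(\bX_i)\sff_{\oomega'}(\bX_i)\Big).
\end{equation*}

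Next I would expand $\sff_\oomega=\sum_{\bk\in S}A\omega_\bk\varphi_\bk$, so that $\sum_i\sff_\oomega(\bX_i)\sff_{\oomega'}(\bX_i)=A^2\sum_{\bk,\bk'\in S}\omega_\bk\omega'_{\bk'}\big(\sum_i\varphi_\bk(\bX_i)\varphi_{\bk'}(\bX_i)\big)$. Splitting the inner sum into the diagonal $\bk=\bk'$ (where $\tfrac1n\sum_i\varphi_\bk(\bX_i)^2\le\epsilon$ by hypothesis, but also $\varphi_\bk^2\le 2$ so $\sum_i\varphi_\bk(\bX_i)^2\le 2n$; I will use the cruder bound $n\cdot 2$ or keep $n\epsilon$ as convenient) and the off-diagonal $\bk\ne\bk'$ (each factor bounded by $n\epsilon$), and noting that on the diagonal $\omega_\bk\omega'_\bk$ are independent $\pm1$ while off-diagonal products are also controlled, I get $\chi^2+1\le\Ex_{\oomega,\oomega'}\exp\big(2nA^2\sum_{\bk}\omega_\bk\omega'_\bk+nA^2\epsilon\sum_{\bk\ne\bk'}\omega_\bk\omega_{\bk'}\omega'_\bk\omega'_{\bk'}\big)$. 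Conditioning on $\oomega$ and using $\Ex_{\omega'_\bk}e^{t\omega'_\bk}=\cosh t\le e^{t^2/2}$ on the linear-in-$\oomega'$ part handles the diagonal term, contributing a factor $\exp(2n^2A^4|S|)$ after also averaging over $\oomega$; the off-diagonal term is a lower-order correction producing the bracketed factor $\{1+|S|\epsilon/(4nA^2)\}$, which I would extract by a Cauchy–Schwarz or a crude $e^x\le 1+xe^x$ step together with $\log(1+u)\le u$.

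Finally, combining the pieces, $\KL(\PP_1,\PP_0)\le\log(1+\chi^2(\PP_1,\PP_0))\le 4|S|A^4n^2\{1+|S|\epsilon/(4nA^2)\}$, which is exactly the claimed bound (the first inequality in the display, $\KL\le\log\Ex_{\PP_0}(d\PP_1/d\PP_0)^2$, is just Jensen applied to $\log$ inside the definition of $\KL$). The main obstacle I anticipate is the bookkeeping in the off-diagonal Rademacher average: one must be careful that the cross terms $\sum_{\bk\ne\bk'}\omega_\bk\omega_{\bk'}\omega'_\bk\omega'_{\bk'}$, which are not sign-definite, are genuinely dominated by the stated multiplicative correction rather than causing exponential blow-up. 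This is handled because $\epsilon$ is assumed small (of order $n/N_1(d^*,L)^2$ via \eqref{orth}) and $|S|\epsilon/(nA^2)$ stays bounded, so a first-order Taylor expansion of the exponential in the off-diagonal contribution suffices; I would make this rigorous by separating $\exp(X+Y)\le\exp(X)\cdot(1+Ye^{|Y|})$ with $X$ the diagonal part and $Y$ the off-diagonal part, then bounding $\Ex_{\oomega,\oomega'}[Ye^{|Y|}\,|\,\text{diagonal}]$ by $|S|^2\epsilon\, nA^2$ times a harmless constant.
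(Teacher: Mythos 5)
Your overall plan is the same as the paper's: compute the $\chi^2$-divergence via the Gaussian likelihood ratio, arrive at $\chi^2+1=\Ex_{\oomega,\oomega'}\exp\big(\sum_i\sff_\oomega(\bX_i)\sff_{\oomega'}(\bX_i)\big)$, split into diagonal and off-diagonal parts in $\bk$, use $\cosh t\le e^{t^2}$ on the diagonal, and finish with Jensen for $\KL\le\log\Ex_{\PP_0}(d\PP_1/d\PP_0)^2$. However, the off-diagonal step as written is both incorrect and overcomplicated.

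Concretely, expanding $\sum_i\sff_\oomega(\bX_i)\sff_{\oomega'}(\bX_i)$ gives $\sum_{\bk,\bk'\in S}\omega_\bk\omega'_{\bk'}\,b_{\bk\bk'}$ with $b_{\bk\bk'}=A^2\sum_i\varphi_\bk(\bX_i)\varphi_{\bk'}(\bX_i)$: each off-diagonal term involves \emph{two} Rademachers ($\omega_\bk$ from $\oomega$ and $\omega'_{\bk'}$ from $\oomega'$), not the four-fold product $\omega_\bk\omega_{\bk'}\omega'_\bk\omega'_{\bk'}$ you wrote. Moreover, you cannot replace $b_{\bk\bk'}$ by its upper bound $nA^2\epsilon$ \emph{inside} the exponent while keeping the sign-dependent coefficient $\omega_\bk\omega'_{\bk'}$ in front, since that is not monotone in $b_{\bk\bk'}$. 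The correct and much simpler move, which the paper uses, is to bound the off-diagonal factors deterministically: for every fixed $(\oomega,\oomega')$,
$\prod_{\bk\ne\bk'}\exp\big(\omega_\bk\omega'_{\bk'}b_{\bk\bk'}\big)\le\prod_{\bk\ne\bk'}\exp\big(|b_{\bk\bk'}|\big)\le\exp\big(|S|^2A^2n\epsilon\big)$.
This eliminates the need for any first-order Taylor expansion, the $e^x\le 1+xe^x$ trick, or Cauchy--Schwarz, and it immediately factors the remaining diagonal average as $\prod_{\bk\in S}\cosh(b_{\bk\bk})\le e^{4|S|A^4n^2}$ (using $0\le b_{\bk\bk}\le 2A^2n$). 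Multiplying the two bounds and taking $\log$ gives exactly the claimed right-hand side. The "main obstacle" you flagged --- that the cross terms are not sign-definite --- is a non-issue because the off-diagonal contribution is bounded uniformly in $(\oomega,\oomega')$; no conditional averaging of the cross terms is required.
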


These evaluations lead to the following theorem, that tells us that the conditions to which we have resorted for proving the consistency in Section~\ref{sec:3} are nearly optimal. 

\begin{theorem}\label{thm2}
Let the design $\bX_1,\ldots, \bX_n\in [0,1]^d$ be deterministic and satisfy (\ref{orth}). 
Let $\gamma^*$ the largest real number such that $d^*\gamma^*$ is integer and 
$L\geq \gamma^*(1+{1}/{2\zbstar})$.  If for some positive number $\alpha<({\log 3-\log 2})/{\log 3}$
\begin{equation}\label{hyp1}
\ds\frac{(N_1(d^*,\gamma^*)-N_2(d^*,\gamma^*))^2\log \binom{d}{d^*}}{n^2N_1(d^*,\gamma^*)}\geq \frac{\alpha}{5},
\end{equation}
then there exists a positive constant $c>0$ and a $d_0\in\NN$ such that, if $d^*\ge d_0$,
$$
\inf_{\widetilde{J}}\sup_{\sff\in \widetilde\Sigma_L} \Pb_\sff(\widetilde{J}\neq J_\sff)\geq c.
$$ 
\end{theorem}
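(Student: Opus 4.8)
The plan is to combine the Fano-type bound of Lemma~\ref{lem:fano} with the second-moment (chi-square) estimate of Lemma~\ref{lem:3}, choosing the priors $\mu_0,\ldots,\mu_M$ so that the sparsity patterns $J_1,\ldots,J_M$ are as numerous as possible while keeping each $\mu_\ell$ close enough to $\mu_0$. First I would fix the lattice set: for each subset $\mathcal I\subset\{1,\ldots,d\}$ of cardinality $d^*$, embed into $\ZZ^d$ the shifted set of lattice points from $\mathcal C_1(d^*,\gamma^*)$ that have a nonzero first coordinate (so that every coordinate indexed by $\mathcal I$ is genuinely active for at least one $\bk$); call this $S_{\mathcal I}$, so $|S_{\mathcal I}|=N_1(d^*,\gamma^*)-N_2(d^*,\gamma^*)$. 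Taking $M=\binom{d}{d^*}$ and $\mu_\ell=\mu_{S_{\mathcal I_\ell}}$ with amplitude $A$ to be chosen, the functions $\sff_\oomega=\sum_{\bk\in S_{\mathcal I_\ell}}A\omega_\bk\varphi_\bk$ have sparsity pattern exactly $\mathcal I_\ell$, and the different $\mathcal I_\ell$ are distinct by construction, so $\PP_\ell(\psi\neq\ell)$ is exactly the quantity controlled by Fano.

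Next I would check membership in $\widetilde\Sigma_L$ and the identifiability condition \textbf{[C2]}. For $\sff_\oomega$ as above, $\sum_{\bk:k_j\neq 0}\theta_\bk[\sff_\oomega]^2=A^2\cdot\#\{\bk\in S_{\mathcal I_\ell}:k_j\neq 0\}\ge A^2$ for $j\in\mathcal I_\ell$ (each active coordinate is hit at least once), which forces the choice $A=N_1(d^*,\gamma^*)^{-1/2}$ so that $\kappa=1$ is attainable with room to spare; one then verifies $\sum_\bk k_j^2\theta_\bk^2\le A^2\sum_{\bk\in\mathcal C_1}k_j^2\le A^2 N_1(d^*,\gamma^*)\gamma^*(1+\tfrac1{2\zbstar})\le L$, where the bound on $\sum_{\bk\in\mathcal C_1}k_j^2$ comes from the same saddle-point analysis (Proposition~\ref{prop:1}) applied to the generating function weighted by $k_j^2$ — this is exactly where the hypothesis $L\ge\gamma^*(1+1/2\zbstar)$ enters. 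Then I would invoke Lemma~\ref{lem:3} with $\epsilon=n/N_1(d^*,\gamma^*)^2$ (from assumption \eqref{orth}) and $|S|=N_1(d^*,\gamma^*)-N_2(d^*,\gamma^*)$: the bracketed correction term $|S|\epsilon/(4nA^2)$ becomes $O(1)$ with the above choice of $A$, so $\KL(\PP_\ell,\PP_0)\le 4|S|A^4n^2(1+o(1))=4n^2(N_1-N_2)^2/N_1^2\cdot(1+o(1))$.

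The final step is bookkeeping: average over $\ell$, note $\frac1{M+1}\sum_\ell\KL(\PP_\ell,\PP_0)\le \frac{4n^2(N_1(d^*,\gamma^*)-N_2(d^*,\gamma^*))^2}{N_1(d^*,\gamma^*)}(1+o(1))$ — wait, I must be careful with the $n^2$ versus $N_1$ placement; the correct reading after substituting $A^4=N_1^{-2}$ is $4|S|A^4n^2=4n^2(N_1-N_2)^2/N_1^{2}$, and comparing with $\alpha\log M=\alpha\log\binom{d}{d^*}$, hypothesis \eqref{hyp1} (after absorbing the factor $5$ to cover the $1+o(1)$ slack and the $\log$-term manipulations) gives $\frac1{M+1}\sum_\ell\KL(\PP_\ell,\PP_0)\le\alpha\log M$ for $d^*$ large enough. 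Lemma~\ref{lem:fano} then yields $\bar p_{e,M}\ge\frac{\log(M+1)-\log2}{\log M}-\alpha$, which is bounded below by a positive constant $c$ since $M=\binom{d}{d^*}\to\infty$ forces the first fraction to $1$ and $\alpha<(\log3-\log2)/\log3<1$; chaining through \eqref{minor:2} finishes the proof. I expect the main obstacle to be the verification that $\sff_\oomega\in\widetilde\Sigma_L$, i.e.\ bounding the weighted sum $\sum_{\bk\in\mathcal C_1(d^*,\gamma^*)}k_j^2$ uniformly in $j$ by $N_1(d^*,\gamma^*)\gamma^*(1+1/2\zbstar)$; this requires a companion saddle-point estimate (differentiating the Jacobi $\theta$-function representation) analogous to Proposition~\ref{prop:1} but for the second-moment generating series, and getting the constant $1+1/2\zbstar$ exactly right is the delicate point.
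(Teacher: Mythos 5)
Your overall strategy---Fano's lemma combined with the second-moment bound of Lemma~\ref{lem:3}, with priors supported on Rademacher combinations of Fourier modes indexed by lattice points in a ball---is exactly the paper's approach, and the bookkeeping at the end (absorbing the $\tfrac54$ factor into the $5$, letting $M=\binom{d}{d^*}\to\infty$) is sound. However there is a substantive error in the construction that makes your functions fall outside $\widetilde\Sigma_L$. You take $S_{\mathcal I}$ to be only the lattice points with nonzero first coordinate, $|S_{\mathcal I}|=N_1-N_2$, and set $A=N_1(d^*,\gamma^*)^{-1/2}$; but then for the identifiability requirement \textbf{[C2]} with $\kappa=1$ you need $\sum_{\bk:k_j\neq 0}\theta_\bk^2\ge 1$, and your own computation gives only $A^2\cdot\#\{\bk\in S:k_j\neq 0\}\le A^2(N_1-N_2)=(N_1-N_2)/N_1<1$. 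The bound ``$\ge A^2$'' you write is several orders of magnitude short of $1$, so the claim that $\kappa=1$ is ``attainable with room to spare'' is false. The paper instead takes $S=\mathcal C_1(d^*,\gamma^*)$ (the full ball, $|S|=N_1$) and $A=(N_1-N_2)^{-1/2}$: the symmetric set makes $\#\{\bk\in\mathcal C_1:k_j\neq 0\}=N_1-N_2$ identical for every $j$, so $\sum_{k_j\neq 0}\theta_\bk^2=A^2(N_1-N_2)=1$ exactly. Your asymmetric choice privileges coordinate $1$ and would in any case give a $j$-dependent identifiability level. Note also that taking $|S|=N_1$ (not $N_1-N_2$) in Lemma~\ref{lem:3} is what produces the $N_1$ appearing in the denominator of hypothesis \eqref{hyp1}; your substitution ``$4|S|A^4n^2=4n^2(N_1-N_2)^2/N_1^2$'' is arithmetically off (with your $A$ and $|S|$ it is $4n^2(N_1-N_2)/N_1^2$) and does not match the form of \eqref{hyp1}.

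The second misstep is in how you envision the smoothness check. You claim the bound $\sum_{\bk\in\mathcal C_1}k_j^2\le N_1\gamma^*(1+1/2\zbstar)$ requires ``a companion saddle-point estimate for the second-moment generating series,'' and call getting $1+1/2\zbstar$ right ``the delicate point.'' In fact no such weighted saddle-point estimate is needed or used: by the coordinate symmetry of $\mathcal C_1$, one has $\sum_{\bk\in\mathcal C_1}k_j^2=\tfrac1{d^*}\sum_{\bk\in\mathcal C_1}\|\bk\|_2^2\le\gamma^*N_1$ trivially. The factor $1+1/(2\zbstar)$ arises from a completely different place: with the correct $A^2=(N_1-N_2)^{-1}$ one is left with $A^2\gamma^*N_1=\gamma^*N_1/(N_1-N_2)$, and Proposition~\ref{prop:1} gives $N_1/(N_1-N_2)\to\sfh(\zbstar)/(\sfh(\zbstar)-1)<1+1/(2\zbstar)$ (since $\sfh(z)>1+2z$), which is where the hypothesis $L\ge\gamma^*(1+1/2\zbstar)$ is used. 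So the delicate step you anticipate does not exist, while the one you skipped (choosing $A$ large enough for \textbf{[C2]}) is where the argument actually bites.
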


\begin{proof}
We apply the Fano lemma with $M = \binom{d}{d^*}$. We choose $\mu_0,\ldots, \mu_M$ as follows. 
$\mu_0$ is the Dirac measure $\delta_0$, $\mu_1$ is defined as in Lemma~\ref{lem:3} with $S=\mathcal{C}_1(d^*,\gamma^*)$ and $A={\big[N_1(d^*,\gamma^*)-N_2(d^*,\gamma^*)\big]}^{-1/2}$. The measures $\mu_2,\ldots,\mu_M$ 
are defined similarly and correspond to the $M-1$ remaining sparsity patterns of cardinality $d^*$. 

In view of inequality (\ref{minor:2}) and Lemma~\ref{lem:fano}, it suffices to show that the measures $\mu_\ell$ 
satisfy $\mu_\ell(\widetilde\Sigma_L)=1$ and $\sum_{\ell=0}^M\KL(\PP_\ell,\PP_0)\le (M+1)\alpha \log M$. 
Combining Lemma~\ref{lem:3} with $|S|=N_1(d^*,\gamma^*)$ and condition~(\ref{orth}), one easily checks that 
equation (\ref{hyp1}) implies the desired bound on $\sum_{\ell=0}^M\KL(\PP_\ell,\PP_0)$. 

Let us show now that $\mu_1(\widetilde\Sigma_L)=1$. By symmetry, this will imply that $\mu_\ell(\widetilde\Sigma_L)=1$ for
every $\ell$. Since $\mu_1$ is supported by the set $\{\sff_\oomega:\oomega\in \{\pm1\}^{\mathcal C(d^*,\gamma^*)}\}$, it is 
clear that 
$$
\sum_{k_1\neq 0} \theta_\bk^2[\sff_\oomega]=A^2[N_1(d^*,\gamma^*)-N_2(d^*,\gamma^*)]=1
$$
{and}, for every $j=1,\ldots,d^*$, 
$$ 
\sum_{\bk\in \ZZ^d} k_j^2\theta_\bk^2[\sff_\oomega]=\sum_{\bk\in \mathcal C(d^*,\gamma^*)} k_j^2 A^2=\frac1{d^*} 
\sum_{j=1}^{d^*}\sum_{\bk\in \mathcal C(d^*,\gamma^*)} k_j^2 A^2\leq A^2 \gamma^*
N_1(d^*,\gamma^*). 
$$
By virtue of Proposition~\ref{prop:1}, as $d^*$ tends to infinity, ${N_1(d^*,\gamma^*)}/{N_2(d^*,\gamma^*)}$ is asymptotically equivalent to $\sfh(\zbstar)>1+2\zbstar$. Hence, for $d^*$ large enough, 
$$
A^2N_1(d^*,\gamma^*)=\frac{N_1(d^*,\gamma^*)}{N_1(d^*,\gamma^*)-N_2(d^*,\gamma^*)}<\frac{1}{2\zbstar}+1.
$$
As a consequence, for every $j=1,\ldots,d^*$,
$$
\sum_{\bk\in \ZZ^d} k_j^2\theta_\bk^2[\sff_\oomega]\le \gamma^*\Big(\frac{1}{2\zbstar}+1\Big)\le L,
$$
where the last inequality follows from the definition of $\gamma^*$.
\end{proof}

Note that Theorem~\ref{thm2} is concerned by the case where the intrinsic dimension is not too small, which is
the most interesting case in the present context. However, a much simpler result can be established showing that
the conditions of Theorem~\ref{thm1} are tight in the case of fixed intrinsic dimension as well. 

\begin{proposition}\label{prop:2}
Let the design $\bX_1,\ldots, \bX_n\in [0,1]^d$ be either deterministic or random. If for some  
positive $\alpha<{(\log 3-\log 2)}/{\log 3}$, the inequality 
$$
\frac{d^*\big(\log d-\log d^*\big)}{n}\geq \alpha^{-1}
$$ 
holds true, then there is a constant $c>0$ such that $\inf_{\widetilde J_n}\sup_{f\in \widetilde\Sigma_L} 
\Pb_\sff(\widetilde J_n\neq J_\sff)\geq c$.
\end{proposition}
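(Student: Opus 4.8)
The plan is to mimic the proof of Theorem~\ref{thm2} but with a much cruder, purely combinatorial construction of the hypotheses $\mu_0,\ldots,\mu_M$, so that no lattice-point counting is needed. I will again apply the Fano lemma (Lemma~\ref{lem:fano}) with $M=\binom{d}{d^*}$, taking $\mu_0=\delta_0$ and, for each of the $M$ subsets $J_\ell\subset\{1,\ldots,d\}$ with $|J_\ell|=d^*$, letting $\mu_\ell=\delta_{\sff_\ell}$ be a Dirac mass at a single well-chosen function $\sff_\ell$ supported on the variables in $J_\ell$. The key point is that a \emph{single} small-amplitude function per pattern suffices here, because with $d^*$ fixed (or slowly growing) we only need $M$ hypotheses that are pairwise distinguishable in their sparsity pattern and each individually close to $0$ in Kullback--Leibler divergence.

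First I would construct $\sff_\ell$. For the pattern $J_\ell$ I take $\sff_\ell(\bx)=A\sum_{j\in J_\ell}\varphi_{\be_j}(\bx)$ where $\be_j$ is the canonical basis vector of $\ZZ^d$ (so $\varphi_{\be_j}(\bx)=\sqrt2\cos(2\pi x_j)$), and $A>0$ is a small constant to be fixed. Then each relevant variable $j\in J_\ell$ contributes $Q_j[\sff_\ell]=2A^2\ge\kappa=1$ provided $A\ge 1/\sqrt2$, and the Sobolev-type constraint reads $\sum_{\bk}k_j^2\theta_\bk^2[\sff_\ell]=A^2\le L$, so the choice is feasible as soon as $L\ge 1/2$, which we may assume (absorbing it into the hypotheses of the proposition, or noting $L$ is a fixed constant). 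Hence $\mu_\ell(\widetilde\Sigma_L)=1$ for all $\ell$, and the sparsity patterns $J_1,\ldots,J_M$ are all distinct by construction.

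Next I would bound the Kullback--Leibler divergence $\KL(\PP_\ell,\PP_0)$. Since the $\varepsilon_i$ are i.i.d.\ standard Gaussian and the design is fixed, $\PP_\ell$ and $\PP_0$ are Gaussian vectors in $\RR^n$ with the same covariance, differing only in their means, so $\KL(\PP_\ell,\PP_0)=\frac1{2\sigma^2}\sum_{i=1}^n \sff_\ell(\bX_i)^2$. Using $|\sff_\ell(\bX_i)|\le \sqrt2\,A\,d^*$ pointwise (each of the $d^*$ cosine terms is bounded by $\sqrt2 A$), this is at most $A^2 d^{*2} n/\sigma^2$, which is too lossy; instead I would keep the functional form $\sum_i \sff_\ell(\bX_i)^2 = 2A^2\sum_i\big(\sum_{j\in J_\ell}\cos(2\pi X_{ij})\big)^2$ and bound it by $2A^2 n d^*$ after invoking a near-orthogonality condition on the design of the same flavour as (\ref{orth}) — or, more simply, since the proposition allows \emph{random} design as well, average over $\bX_i$ and use independence of coordinates to get $\E\sum_i\sff_\ell(\bX_i)^2 = n A^2 d^*$, reducing to an in-expectation version of the Fano bound. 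Either way, $\KL(\PP_\ell,\PP_0)\lesssim A^2 d^* n$, so the Fano hypothesis $\frac1{M+1}\sum_\ell \KL(\PP_\ell,\PP_0)\le\alpha\log M$ with $\log M=\log\binom{d}{d^*}\ge d^*\log(d/d^*)$ is satisfied once $A^2 d^* n \le \alpha\, d^*\log(d/d^*)$, i.e.\ once $d^*(\log d-\log d^*)/n\ge \alpha^{-1}$ after choosing $A$ a suitable absolute constant (here $A^2=1/2$), which is exactly the assumed inequality. Lemma~\ref{lem:fano} then gives $\bar p_{e,M}\ge \frac{\log(M+1)-\log2}{\log M}-\alpha\to 1-\alpha>0$, and combined with (\ref{minor:2}) this yields the claimed constant $c>0$.

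The main obstacle is the constant-bookkeeping in the Kullback--Leibler step: I must pin down $A$ so that simultaneously (i) $Q_j[\sff_\ell]\ge 1$ and the Sobolev constraint $A^2\le L$ hold (forcing $A$ into a fixed range), and (ii) $\KL(\PP_\ell,\PP_0)\le\alpha\log M$ holds under the stated inequality, which requires the design-dependent bound on $\sum_i\sff_\ell(\bX_i)^2$ to be tight up to a universal constant — this is where the precise numerical constant $\alpha^{-1}$ (rather than $5\alpha^{-1}$ or similar) in the proposition comes from, and where care with the deterministic-versus-random-design dichotomy is needed. Everything else is routine given Lemma~\ref{lem:fano} and the reduction (\ref{minor:2}).
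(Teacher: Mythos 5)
Your approach — Fano's lemma with $M=\binom{d}{d^*}$ hypotheses, one Dirac mass per sparsity pattern plus $\mu_0=\delta_0$ — is the right one and matches the paper structurally. But your choice of the test functions $\sff_\ell$ loses a factor of $d^*$ in the Kullback--Leibler divergence, and this is a genuine gap, not just constant bookkeeping.

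Concretely: to satisfy $Q_j[\sff_\ell]\ge\kappa=1$ for all $j\in J_\ell$, you take $\sff_\ell=A\sum_{j\in J_\ell}\varphi_{\be_j}$, i.e.\ \emph{one Fourier mode per relevant coordinate}, and you need $A^2\ge 1$ (not $A\ge 1/\sqrt 2$; $Q_j=\theta_{\be_j}^2=A^2$, there is no extra factor of $2$). Consequently $\|\sff_\ell\|_{L^2}^2=d^*A^2\ge d^*$, and even with perfect near-orthogonality of the design (or after averaging over a random design) the best you can hope for is $\KL(\PP_\ell,\PP_0)\asymp A^2 d^* n$. Plugging this into Fano's condition $\KL\le\alpha\log M\approx\alpha\, d^*\log(d/d^*)$ yields $n\lesssim\alpha\log(d/d^*)$, which is \emph{stronger} than the proposition's hypothesis $n\le\alpha\, d^*\log(d/d^*)$ by a factor of $d^*$. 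Your argument therefore proves the statement only when $d^*$ is bounded, not in the generality in which it is claimed.

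The paper avoids this by a cleverer choice of $\sff_\ell$: it puts the entire signal into a \emph{single} Fourier coefficient, $\theta^\ell_{\bk}=1$ for the one multi-index $\bk=(\1_{1\in I_\ell},\ldots,\1_{d\in I_\ell})$ with $d^*$ nonzero (unit) entries, and $\theta^\ell_{\bk}=0$ otherwise. This single coefficient simultaneously makes $Q_j[\sff_\ell]=1$ for \emph{all} $j\in I_\ell$ at once and keeps the Sobolev norms at $1$, while $\|\sff_\ell\|_{L^2}^2=1$ so that $\KL(\Pb_{\sff_\ell},\Pb_{\sff_0})\asymp n$, independent of $d^*$. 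That is precisely the $d^*$ you are missing. If you replace your additive construction by this single ``diagonal'' mode, the rest of your argument (feasibility in $\widetilde\Sigma_L$, Fano, the bound $\log\binom{d}{d^*}\ge d^*\log(d/d^*)$) goes through as you describe, and you should also drop the digression about averaging over a random design: with a single bounded basis function, $\sum_i\sff_\ell(\bX_i)^2\le 2n$ pointwise for \emph{any} design, deterministic or random.
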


 \section{Discussion}\label{sec:6}

The results proved in previous sections almost exhaustively answer the questions on the existence of consistent 
estimators of the sparsity pattern in the problem of nonparametric regression. In fact as far as only rates of convergence
are of interest, the result obtained in Theorem~\ref{thm1} is shown in Section~\ref{sec:5} to be unimprovable. Thus only 
the problem of finding sharp constants remains open. To make these statements more precise, let us consider the simplified 
set-up $\sigma=\kappa=1$ and define the following two regimes: 
\begin{enumerate}
\item[$\checked$] The regime of fixed sparsity, \textit{i.e.}, when the sample size $n$ and the ambient dimension $d$ 
tend to infinity but the intrinsic dimension $d^*$ remains constant or bounded. 
\item[$\checked$] The regime of increasing sparsity, \textit{i.e.}, when the intrinsic dimension $d^*$ tends to infinity 
along with the sample size $n$ and the ambient dimension $d$. For simplicity, we will assume that $d^*=O(d^{1-\epsilon})$ for some 
$\epsilon>0$.
\end{enumerate}
In the fixed sparsity regime, in view of Theorem~\ref{thm1}, consistent estimation of the sparsity pattern can be achieved using
the estimator $\widehat J$ as soon as $(\log d)/n \le c_\star$, where $c_\star$ is the constant defined by
$$
c_\star=\min\Big(\frac{L_2^2}{2d^*L_\infty^2}, \frac{\gmin^2}{2^8(1+L_2)^2d^*N(d^*,2L)}\Big).
$$
This follows from the fact that the tuning parameter $m$ is fixed and that the probability of the error, bounded by $3(6md)^{d^*}$ 
tends to zero as $d\to\infty$. On the other hand, by virtue of Proposition~\ref{prop:2}, consistent estimation of the sparsity pattern 
is impossible if $(\log d)/n> c^\star$, where $c^\star=2\log 3/(d^*\log(3/2))$. Thus, up to multiplicative constants $c_\star$ and
$c^\star$ (which are clearly not sharp), the result of Theorem~\ref{thm1} can not be improved. 

In the regime of increasing sparsity, the second inequality in (\ref{cond3}) is the most stringent one. Taking the logarithm of both sides
and using formula (\ref{eq:log}) for $N(d^*,2L)=N_1(d^*,2L)-N_2(d^*,2L)$, we see that consistent estimation of $J$ is possible when 
\begin{align}\label{eq:10}
\underline{c}_1d^*+\frac12\log d^*+\log\log d-\log n<\underline{c}_2,
\end{align}
with $\underline{c}_1=\sfl_{2L}(z_{2L})$ and $\underline{c}_2=2(\log(\gmin)-\log(17(\sigma+L_2))+
\log\Big\{\frac{\sfh(z_{2L})z_{2L}(1-z_{2L})(2\sfl_{2L}''(z_{2L})\pi)^{1/2}}{\sfh(z_{2L})-1}\Big\}$. 
On the other hand, by virtue of (\ref{eq:log}), $\log\Big\{\frac{[N_1(d^*,\gamma)-N_2(d^*,\gamma)]^2}{N_1(d^*,\gamma)}\Big\}=
d^*\sfl_\gamma(\zb)-\frac12\log d^*-\log\Big\{\frac{\sfh(\zb)^2\zb(1-\zb)(2\sfl_\gamma''(\zb)\pi)^{1/2}}{(\sfh(\zb)-1)^2}\Big\}+o(1)$. 
Therefore, Theorem~\ref{thm2} yields that it is impossible to consistently estimate $J$ if 
\begin{align}\label{eq:11}
\bar{c}_1d^*+\frac12\log d^*+\log\log d-2\log n>\bar{c}_2,
\end{align}
where $\bar{c}_1=\sfl_{\gamma^*}(z_{\gamma^*})$ and 
$\bar c_2=\log\Big\{\frac{\sfh(z_{\gamma^*})^2z_{\gamma^*}(1-z_{\gamma^*})(2\sfl_{\gamma^*}''(z_{\gamma^*})\pi)^{1/2}}{(\sfh(z_{\gamma^*})-1)^2}\Big\}+
\log\log(3/2)-\log 5-\log\log 3$. A very simple consequence of inequalities (\ref{eq:10}) and (\ref{eq:11}) is that 
the consistent recovery of the sparsity pattern is possible under the condition $d^*/\log n\to 0$ and impossible for $d^*/\log n\to \infty$ as
$n\to\infty$, provided that $\log\log d=o(\log n)$.

Let us stress now that, all over this work, we have deliberately avoided any discussion on the computational aspects of the variable selection 
in nonparametric regression. The goal in this paper was to investigate the possibility of consistent recovery without paying attention to the 
complexity of the selection procedure. This lead to some conditions that could be considered a benchmark for assessing the properties of 
sparsity pattern estimators. As for the estimator proposed in Section~\ref{sec:3}, it is worth noting that its computational complexity is not 
always prohibitively large. A recommended strategy is to compute the coefficients $\hatthetak$ in a stepwise manner; at each step 
$K=1,2,\ldots,d^*$ only the coefficients $\hatthetak$ with $\|\bk\|_0=K$ need to be computed and compared with the threshold. If some 
$\hatthetak$ exceeds the threshold, then all the covariates $X^j$ corresponding to nonzero coordinates of $\bk$ are considered as relevant. 
We can stop this computation as soon as the number of covariates classified as relevant attains $d^*$. While the worst-case complexity of this procedure is exponential, there are many functions $\sff$ 
for which the complexity of the procedure will be polynomial in $d$. For example, this is the case for additive models in which 
$\sff(\bx)= \sff_1(x_{i_1})+\ldots+\sff_{d^*}(x_{i_{d^*}})$ for some univariate functions $\sff_1,\ldots,\sff_{d^*}$.

%
%
%
%
%

\bibliography{Laetitia_COLT}

\begin{thebibliography}{29}
\providecommand{\natexlab}[1]{#1}
\providecommand{\url}[1]{\texttt{#1}}
\expandafter\ifx\csname urlstyle\endcsname\relax
  \providecommand{\doi}[1]{doi: #1}\else
  \providecommand{\doi}{doi: \begingroup \urlstyle{rm}\Url}\fi

\bibitem[Akaike(1973)]{Akaike}
Hirotsugu Akaike.
\newblock Information theory and an extension of the maximum likelihood
  principle.
\newblock In \emph{Second {I}nternational {S}ymposium on {I}nformation {T}heory
  ({T}sahkadsor, 1971)}, pages 267--281. Akad\'emiai Kiad\'o, Budapest, 1973.

\bibitem[Alquier(2008)]{alquier}
Pierre Alquier.
\newblock Iterative feature selection in least square regression estimation.
\newblock \emph{Ann. Inst. Henri Poincar\'e Probab. Stat.}, 44\penalty0
  (1):\penalty0 47--88, 2008.

\bibitem[Bach(2009)]{Bach09}
Francis Bach.
\newblock High-dimensional non-linear variable selection through hierarchical
  kernel learning.
\newblock Technical report, arXiv:0909.0844, 2009.

\bibitem[Bertin and Lecu{\'e}(2008)]{BertinLecue}
Karine Bertin and Guillaume Lecu{\'e}.
\newblock Selection of variables and dimension reduction in high-dimensional
  non-parametric regression.
\newblock \emph{Electron. J. Stat.}, 2:\penalty0 1224--1241, 2008.

\bibitem[Bickel et~al.(2010)Bickel, Ritov, and Tsybakov]{Bickeletal}
Peter~J. Bickel, Ya'acov Ritov, and Alexandre~B. Tsybakov.
\newblock Hierarchical selection of variables in sparse high-dimensional
  regression.
\newblock \emph{Borrowing Strength: Theory Powering Applications - A
  Festschrift for Lawrence D. Brown. IMS Collections}, 6:\penalty0 56--69,
  2010.

\bibitem[Bunea and Barbu(2009)]{Bunea09}
Florentina Bunea and Adrian Barbu.
\newblock Dimension reduction and variable selection in case control studies
  via regularized likelihood optimization.
\newblock \emph{Electron. J. Stat.}, 3:\penalty0 1257--1287, 2009.

\bibitem[Dieudonn{\'e}(1968)]{Dieudonne}
Jean Dieudonn{\'e}.
\newblock \emph{Calcul infinit\'esimal}.
\newblock Hermann, Paris, 1968.

\bibitem[Donoho and Jin(2009)]{Donoho09}
David Donoho and Jiashun Jin.
\newblock Feature selection by higher criticism thresholding achieves the
  optimal phase diagram.
\newblock \emph{Philos. Trans. R. Soc. Lond. Ser. A Math. Phys. Eng. Sci.},
  367\penalty0 (1906):\penalty0 4449--4470, 2009.
\newblock With electronic supplementary materials available online.

\bibitem[Fan et~al.(2009)Fan, Samworth, and Wu]{Fan09}
Jianqing Fan, Richard Samworth, and Yichao Wu.
\newblock Ultrahigh dimensional feature selection: beyond the linear model.
\newblock \emph{J. Mach. Learn. Res.}, 10:\penalty0 2013--2038, 2009.

\bibitem[Fano(1961)]{Fano}
Robert~M. Fano.
\newblock \emph{Transmission of information: {A} statistical theory of
  communications.}
\newblock The M.I.T. Press, Cambridge, Mass., 1961.

\bibitem[Jenatton et~al.(2009)Jenatton, Audibert, and Bach]{svssin}
Rodolphe Jenatton, Jean-Yves Audibert, and Francis Bach.
\newblock Structured variable selection with sparsity-inducing norms.
\newblock Technical report, arXiv:0904.3523, 2009.

\bibitem[Lafferty and Wasserman(2008)]{LaffertyWasserman}
John Lafferty and Larry Wasserman.
\newblock Rodeo: sparse, greedy nonparametric regression.
\newblock \emph{Ann. Statist.}, 36\penalty0 (1):\penalty0 28--63, 2008.

\bibitem[Laurent and Massart(2000)]{LaurentMassart}
Beatrice Laurent and Pascal Massart.
\newblock Adaptive estimation of a quadratic functional by model selection.
\newblock \emph{Ann. Statist.}, 28\penalty0 (5):\penalty0 1302--1338, 2000.

\bibitem[Lounici et~al.(2010)Lounici, Pontil, Tsybakov, and van~de
  Geer]{Pontil}
Karim Lounici, Massimiliano Pontil, Alexandre~B. Tsybakov, and Sara van~de
  Geer.
\newblock Oracle inequalities and optimal inference under group sparsity.
\newblock Technical report, arXiv:1007.1771, 2010.

\bibitem[Mallows(1973)]{Mallows}
Colin~L. Mallows.
\newblock Some comments on {$C_p$}.
\newblock \emph{Technometrics}, 15:\penalty0 661--675, Nov. 1973.

\bibitem[Mazo and Odlyzko(1990)]{Mazo}
James Mazo and Andrew Odlyzko.
\newblock Lattice points in high-dimensional spheres.
\newblock \emph{Monatsh. Math.}, 110\penalty0 (1):\penalty0 47--61, 1990.

\bibitem[Meinshausen and Bühlmann(2010)]{Meinshausen}
Nicolai Meinshausen and Peter Bühlmann.
\newblock Stability selection.
\newblock \emph{Journal Of The Royal Statistical Society Series B}, 72\penalty0
  (4):\penalty0 417--473, 2010.

\bibitem[Obozinski et~al.(2011)Obozinski, Wainwright, and Jordan]{Obozinski}
Guillaume Obozinski, Martin~J. Wainwright, and Michael~I. Jordan.
\newblock High-dimensional union support recovery in multivariate.
\newblock \emph{The Annals of Statistics}, to appear, 2011.

\bibitem[Ravikumar et~al.(2010)Ravikumar, Wainwright, and Lafferty]{Ravik}
Pradeep Ravikumar, Martin~J. Wainwright, and John~D. Lafferty.
\newblock High-dimensional {I}sing model selection using {$\ell_1$}-regularized
  logistic regression.
\newblock \emph{Ann. Statist.}, 38\penalty0 (3):\penalty0 1287--1319, 2010.

\bibitem[Schwarz(1978)]{Schwarz}
Gideon Schwarz.
\newblock Estimating the dimension of a model.
\newblock \emph{Ann. Statist.}, 6\penalty0 (2):\penalty0 461--464, 1978.

\bibitem[Scott and Berger(2010)]{Scott10}
James~G. Scott and James~O. Berger.
\newblock Bayes and empirical-{B}ayes multiplicity adjustment in the
  variable-selection problem.
\newblock \emph{Ann. Statist.}, 38\penalty0 (5):\penalty0 2587--2619, 2010.

\bibitem[Tibshirani(1996)]{Tibsh}
Robert Tibshirani.
\newblock Regression shrinkage and selection via the lasso.
\newblock \emph{J. Roy. Statist. Soc. Ser. B}, 58\penalty0 (1):\penalty0
  267--288, 1996.

\bibitem[Ting et~al.(2010)Ting, D'Souza, Vijayakumar, and Schaal]{Ting}
Jo-Anne Ting, Aaron D'Souza, Sethu Vijayakumar, and Stefan Schaal.
\newblock Efficient learning and feature selection in high-dimensional
  regression.
\newblock \emph{Neural Comput.}, 22\penalty0 (4):\penalty0 831--886, 2010.

\bibitem[Tsybakov(2009)]{Tsybakov09}
Alexandre~B. Tsybakov.
\newblock \emph{Introduction to nonparametric estimation}.
\newblock Springer Series in Statistics. Springer, New York, 2009.

\bibitem[Wasserman and Roeder(2009)]{Wass09}
Larry Wasserman and Kathryn Roeder.
\newblock High-dimensional variable selection.
\newblock \emph{Ann. Statist.}, 37\penalty0 (5A):\penalty0 2178--2201, 2009.

\bibitem[Zhang(2010)]{ZnangCH10}
Cun-Hui Zhang.
\newblock Nearly unbiased variable selection under minimax concave penalty.
\newblock \emph{Ann. Statist.}, 38\penalty0 (2):\penalty0 894--942, 2010.

\bibitem[Zhang(2009)]{Zhang09}
Tong Zhang.
\newblock On the consistency of feature selection using greedy least squares
  regression.
\newblock \emph{J. Mach. Learn. Res.}, 10:\penalty0 555--568, 2009.

\bibitem[Zhao and Yu(2006)]{Zhao}
Peng Zhao and Bin Yu.
\newblock On model selection consistency of {L}asso.
\newblock \emph{J. Mach. Learn. Res.}, 7:\penalty0 2541--2563, 2006.
\newblock ISSN 1532-4435.

\bibitem[Zhao et~al.(2009)Zhao, Rocha, and Yu]{BinYu09}
Peng Zhao, Guilherme Rocha, and Bin Yu.
\newblock The composite absolute penalties family for grouped and hierarchical
  variable selection.
\newblock \emph{Ann. Statist.}, 37\penalty0 (6A):\penalty0 3468--3497, 2009.

\end{thebibliography}

\appendix
\section{Proof of Theorem~\ref{thm1}}

The empirical Fourier coefficients can be decomposed as follows: 
\begin{equation}
\hatthetak=\tilde{\theta}_\bk+z_\bk,
\qquad\text{where}\qquad  \tilde{\theta}_\bk=\frac{1}{n}\sum_{i=1}^n  \frac{\varphi_\bk(\bX_i)}{\sfg(\bX_i)}\sff(\bX_i)
\qquad  \text{and}\qquad  z_\bk=\frac{\sigma}{n}\sum_{i=1}^n  \frac{\varphi_\bk(\bX_i)}{\sfg(\bX_i)}\varepsilon_i.
\end{equation}
If, for a multi index $\bk$, $\theta_\bk=0$, then   the corresponding empirical Fourier coefficient will be close to zero 
with high probability. To show this, let us first look at what happens with $z_\bk$'s.
We have, for every real number $x$,
$$
\Pb\big(|z_\bk|>x\,\big|\,\bX_1,\ldots,\bX_n\big)\leq \exp\Big(-\frac{x^2}{2\sigma_\bk^2}\Big) \quad \forall \bk\in S_{m,d^*}
$$ 
with
$$
\ds \sigma_\bk^2=\frac{\sigma^2}{n^2}\sum_{i=1}^n \frac{\varphi_\bk(\bX_i)^2}{\sfg(\bX_i)^2}\leq \frac{2 \sigma^2}{\gmin^2n}.
$$
Therefore, for every $\bk\in S_{m,d^*}$, it holds that $\Pb\big(|z_\bk|>x|\bX_1,\ldots,\bX_n\big)\leq \exp({-n\gmin^2x^2}/{4\sigma^2})$. 
This entails that by setting $\lambda_1=({8\sigma^2d^*\log(6md)}/{n\gmin^2})^{1/2}$ and by using the inequalities
\begin{align*}
\text{Card}(S_{m,d^*})&=\sum_{i=0}^{d^*}\binom{d}{i} (2m)^i
                      \le (2m)^{d^*}\sum_{i=0}^{d^*} \frac{d^i}{i!}\\
                      &\le 3 (2md)^{d^*}
										  \le (6md)^{d^*},
\end{align*}
we get
\begin{align*}
\Pb\Big(\max_{\bk\in S_{m,d^*}}|z_\bk|>\lambda_1\,|\,\bX_1,\ldots,\bX_n\Big)&\leq \sum_{\bk\in S_{m,d^*}}
\Pb\Big(|z_\bk|>\lambda_1\,|\,\bX_1,\ldots,\bX_n\Big)\\
&\leq \text{Card}(S_{m,d^*})e^{-n\gmin^2\lambda_1^2/{4\sigma^2}} \leq (6md)^{-d^*}.
\end{align*}
Next, we use a concentration inequality for controlling large deviations of $\tilde{\theta}_\bk$'s from $\theta_\bk$'s. 
Recall that in view of the definition $\tilde{\theta}_\bk=\frac{1}{n}\sum_{i=1}^n \frac{\varphi_\bk(\bX_i)}{\sfg(\bX_i)}\sff(\bX_i)$, we have $\E(\tilde{\theta}_\bk)=\theta_\bk$.  
By virtue of the boundedness of $\sff$, it holds that $|\frac{\varphi_\bk(\bX_i)}{\sfg(\bX_i)}\sff(\bX_i)|\leq { \sqrt{2}L_\infty}/{\gmin}$. 
Furthermore, the bound $V\triangleq\text{Var}\big(\frac{\varphi_\bk(\bX_i)}{\sfg(\bX_i)} \sff(\bX_i)\big)\leq\int f^2(\bx)\frac{\varphi_\bk^2(\bx)}{\sfg(\bx)}d\bx\leq {2L_2^2}/{\gmin^2}$ combined with Bernstein's inequality yields
\begin{align*}
\Pb\big(|\tilde{\theta}_\bk-\theta_\bk|>t\big)&\leq 2\exp\Big(-\frac{nt^2}{2(V+{t\sqrt{2}L_\infty/3\gmin})}\Big)\\
&\leq 2\exp\Big(-\frac{\gmin^2 nt^2}{4L_2^2+{tL_\infty}\gmin}\Big),\qquad \forall t>0.
\end{align*}
Let us define $\lambda_2=4L_2\Big({\frac{d^*\log(6md)}{n\gmin^2}}\Big)^{1/2}$. Then,  
$$
\Pb\big(|\tilde{\theta}_\bk-\theta_\bk|>\lambda_2\big)\leq 2\exp\bigg(-\frac{4L_2^2d^*\log(6md)}
{L_2^2+L_\infty L_2\big({\frac{d^*\log(6md)}{n}}\big)^{1/2}  }\bigg).
$$
The first inequality in condition (\ref{cond3}) implies that the denominator in the exponential is not larger than  
$2L_2^2$. Hence,
$$
\Pb\Big(\max_{\bk\in S_{m,d^*}}|\tilde{\theta}_\bk-\theta_\bk|>\lambda_2\Big)\leq 2/{(6md)^{d^*}}.
$$
Let $\mathcal{A}_1=\big\{\max_{\bk\in S_{m,d^*}}|z_\bk|\leq\lambda_1\big\}$ and 
$\mathcal{A}_2=\big\{\max_{\bk\in S_{m,d^*}}|\tilde{\theta}_\bk|\leq \lambda_2\big\}$.  One easily checks that
$$
\Pb\big(J^c\not\subset \widehat{J}^c\big)\leq \Pb\big(\mathcal{A}_1^c\big)+\Pb\big(\mathcal{A}_2^c\big)\le 3/(6md)^{d^*}.
$$
As for the converse inclusion, we have 
\begin{align*}
\Pb(J\not\subset \widehat J)&\leq\Pb\Big(\exists j\in J\ \text{s.t.}\ \max_{\bk\in S_{m,d^*}:\,k_j\not=0} |\hatthetak|\le{\lambda}\Big)\\
&\le \1\Big\{\exists j\in J\ \text{s.t.}\ \max_{\bk\in S_{m,d^*}\,:k_j\not=0}|\theta_\bk|\le 2 \lambda \Big\}+\Pb\big(\mathcal A_1^c\big)
+\Pb\big(\mathcal A_2^c\big).
\end{align*} 
We show now that the first term in the last line is equal to zero. If this was not the case, then for some value $j_0$ we would 
have $Q_{j_0}\ge \kappa$ and $|\theta_\bk|\le 2\lambda$, for all $\bk\in S_{m,d^*}$ such that $k_{j_0}\not=0$. This would imply that
$$
Q_{j_0,m,d^*}{\triangleq}\sum_{\bk\in S_{m,d^*}\,:k_{j_0}\not=0}  \theta_\bk^2\le 4\lambda^2 N(d^*,^2L/\kappa). 
$$ 
On the other hand,
\begin{align*}
Q_{j_0}-Q_{j_0,m,d^*}&\le\sum_{\|\bk\|_{2}\ge m}  \theta_\bk^2\le m^{-2}
\sum_{\|\bk\|_{2}\ge m} \sum_{j\in J}k_{j}^{2} \theta_\bk^2
\le \frac{L d^*}{ m^{2}}.
\end{align*}
Remark now that the choice of the truncation parameter $m$ proposed in the statement of the proposition implies that 
$Q_{j_0}-Q_{j_0,m,d^*}\le \kappa/2$. Combining these estimates, we get
$
Q_{j_0}\le \frac{\kappa}2+ 4\lambda^2N(d^*,m^2/d^*),
$
which is impossible since $Q_{j_0}\ge \kappa$.

\section{Proof of Proposition~\ref{prop:1}}
\paragraph{Proof of the first assertion.} This proof can be found in~\cite{Mazo}, we repeat here the arguments therein for the sake of keeping
the paper self-contained. Recall that $N_1(d^*,\gamma)$ admits an integral representation with the integrand: 
$$
\frac{\sfh(z)^{d^*}}{z^{\gamma d^*}} \frac{1}{z(1-z)}=\frac{1}{z(1-z)}\exp \bigg[d^*\log\bigg(\frac{\sfh(z)}{z^\gamma}\bigg)\bigg].
$$
For any real number $y>0$, we define $\phi(y)=e^{-y}\sfh'(e^{-y})/\sfh(e^{-y})=\sum_{k=-\infty}^{k=+\infty} k^2 e^{-yk^{2}}/\sum_{k=-\infty}^{k=+\infty} e^{-yk^{2}}$ in such a way that 
$$
\phi(y)=\gamma \quad\Longleftrightarrow\quad \frac{\sfh'(e^{-y})}{\sfh(e^{-y})}=\frac{\gamma }{e^{-y}}
\quad\Longleftrightarrow\quad \sfl_\gamma'(e^{-y})=0.
$$ 
By virtue of the Cauchy-Schwarz inequality, it holds that
$$
\sum k^{4}e^{-yk^{2}}\sum e^{-yk^{2}}>\Big(\sum k^{2}e^{-yk^{2}}\Big)^2, \quad \forall y\in(0,\infty),
$$
implying that $\phi'(y)<0$ for all $y\in (0,\infty)$, \textit{i.e.}, $\phi$ is strictly decreasing. Furthermore, $\phi$ is 
obviously continuous with $\lim_{y\to 0}\phi(y)=+\infty$ and $\lim_{y\to\infty} \phi(y)=0$. These properties imply the 
existence and the uniqueness of $y_\gamma\in(0,\infty)$ such that ${\phi(y_\gamma)}=\gamma$. Furthermore, as the inverse
of a decreasing function, the function $\gamma\mapsto y_\gamma$ is decreasing as well. We set $z_\gamma=e^{-y_{\gamma}}$ 
so that $\gamma\mapsto z_\gamma$ is increasing.

We also have  
\begin{align*}
\sfl_\gamma''(\zb) &= \frac{\sfh''\sfh-(\sfh')^2}{\sfh^2}(\zb)+\frac{\gamma}{\zb^2}=z_\gamma^{-2}\bigg\{
\frac{\sum_k (k^4-k^2)\zb^{k^2}}{\sum_k\zb^{k^2}}-\bigg(\frac{\sum_k k^2\zb^{k^2}}{\sum_k\zb^{k^2}}\bigg)^2+\gamma\bigg\}\\
&=\zb^{-2}\big\{-\phi'(y_\gamma)-\phi(y_\gamma)+\gamma\big\}=-\zb^{-2}\phi'(y_\gamma)>0.
\end{align*}

\paragraph{Proof of the second assertion.}
We apply the saddle-point method to the integral representing  $N_1$ see, \textit{e.g.}, Chapter IX in \cite{Dieudonne}. 
It holds that
\begin{equation}
N_1(d^*,\gamma) = \frac{1}{2\pi i}\oint_{|z|=\zb} \frac{\sfh(z)^{d^*}}{z^{\gamma d^*}} \frac{dz}{z(1-z)}
                = \frac{1}{2\pi i}\oint_{|z|=\zb} \{z(1-z)\}^{-1}e^{d^*\sfl_\gamma(z)}{dz}.
\end{equation}
The first assertion of the proposition provided us with a real number $\zb$ such that $\sfl_\gamma'(\zb)=0$ et $\sfl_\gamma''(\zb)>0$. 
The  tangent to the steepest descent curve at $\zb$ is vertical. The path we choose for integration is the circle with center 0 and 
radius $\zb$.  As this circle and the steepest descent curve  have the same tangent at $\zb$, applying formula (1.8.1) of 
\cite{Dieudonne} (with $\alpha=0$ since $\sfl''(\zb)$ is real and positive), 
we get that  
$$
\frac{1}{2\pi i}\oint_{|z|=\zb} \{z(1-z)\}^{-1}e^{d^*\sfl_\gamma(z)}{dz}=
\frac{1}{2\pi i}\sqrt{\frac{2\pi}{d^*\sfl_\gamma''(\zb)}}e^{{\rm i\pi/2}}\{\zb(1-\zb)\}^{-1}e^{d^*\sfl_\gamma(\zb)}(1+o(1)),
$$
when $d^*\to\infty$, as soon as the condition\footnote{$\Re u$ stands for the real part of the complex number $u$.}  
$\Re [\sfl_\gamma(z)-\sfl_\gamma(\zb)]\leq -\mu$
is satisfied for some $\mu>0$ and for any $z$ belonging to the circle $|z|=|\zb|$ and lying not too close to $\zb$. 
To check that this is indeed the case, we remark that $\Re [\sfl_\gamma(z)]=\log \big|\frac{\sfh(z)}{z^\gamma}\big|$.   
Hence, if  $z=\zb e^{{\rm i}\omega}$ with $\omega\in[\omega_0,2\pi-\omega_0]$ for some $\omega_0\in]0,\pi[$, then
$$
\Big|\frac{\sfh(z)}{z^\gamma}\Big|=\frac{|1+2z+2\sum_{k> 1} z^{k^{2}}|}{\zb^\gamma}
\le\frac{|1+z|+\zb+2\sum_{k>1}\zb^{k^{2}}}{\zb^\gamma} \le\frac{|1+e^{{\rm i}\omega_0}\zb|+\zb+2\sum_{k>1}\zb^{k^{2}}}{\zb^\gamma}.
$$
Therefore $\Re[\sfl_\gamma(z)-\Re \sfl_\gamma(\zb)]\leq -\mu$ with 
$\mu=\log \Big(\frac{1+2\zb+\sum_{k\geq 1}\zb^{k^{2}}}{|1+\zb e^{{\rm i}\omega_0}|+\zb+\sum_{k\geq 1}\zb^{k^{2}}}\Big)>0$.
This completes the proof for the term $N_1(d^*,\gamma)$.  The term $N_2(d^*,\gamma)$ can be dealt in the same way.

\section{Proof of Lemma~\ref{lem:3}}

Let $\phi(\cdot)$ be the density of $\mathcal{N}(0,1)$ and let
$$
p_\sff(\by)\triangleq\prod_{i=1}^n \phi\Big(y_i-\sff(\bX_i)\Big),\qquad\forall \by\in\RR^n.
$$
Since the errors $\varepsilon_i$ are Gaussian, the posterior probabilities  $\PP_0$ and $\PP_1$  are absolutely 
continuous w.r.t.\ the Lebesgue measure on  $\RR^n$ and admit the densities   
$$
p_0(\by)=\prod_{i=1}^n \phi(y_i),\qquad\text{and}\qquad
p_1(\by)=\Ex_{\sff\sim \mu_S}p_\sff(\by),\qquad\forall \by\in\RR^n.
$$ 
Simple algebra yields:
$$
p_\sff(\by)=C_\sff p_0(\by)\prod_{i=1}^n 
\exp\Big\{y_i\sff(\bX_i)\Big\},\quad\forall\by\in\RR^n,
$$ 
where $C_\sff=\prod_{i=1}^n \exp\big\{-\sff(\bX_i)^2/2\big\}$. Thus, 
$$
\frac{p_1}{p_0}(\by)=\Ex_{\sff\sim\mu_S} \Big[C_\sff\prod_{i=1}^n \exp\Big\{{y_i}\sff(\bX_i)\Big\}\Big].
$$
Therefore,
\begin{align*}
 \int_{\RR^n} \Big(\frac{p_1}{p_0}(\by)\Big)^2p_0(\by)d\by&=\Ex_{(\sff,\sff')\sim\mu_S\otimes\mu_S}\Big[C_\sff C_{\sff'} 
 \int_{\RR^n}\prod_{i=1}^n\bigg( \exp\Big\{{y_i}(\sff+\sff')(\bX_i)\Big\}\phi(y_i)\bigg)d\by\Big] \\
 &=\Ex_{(\sff,\sff')\sim\mu_S\otimes\mu_S}\Big[C_\sff C_{\sff'} \prod_{i=1}^n \exp\Big(\frac{1}{2}(\sff+\sff')^2(\bX_i)\Big)\Big] \\
&=\Ex_{(\sff,\sff')\sim\mu_S\otimes\mu_S}\Big[ \exp\Big(\sum_{i=1}^n\sff(\bX_i)\sff'(\bX_i)\Big)\Big]\\
&=\frac1{2^{2|S|}}\sum_{\oomega,\oomega'\in\{\pm1\}^S}\prod_{\bk, {\bk'} \in S } \exp\Big(\omega_\bk\omega_{\bk'}'b_{\bk{\bk'}}\Big),
\end{align*}
where $b_{\bk{\bk'}}={A^2}\sum_{i=1}^n\varphi_\bk(\bX_i)\varphi_{\bk'}(\bX_i)$, for all $ \bk, {\bk'} \in S$.  Note that  
$0\le b_{\bk\bk}\leq {2A^2n}$ and $|b_{\bk{\bk'}}|\leq {A^2n\epsilon}$, for all $ \bk,\bk' \in S$ such that $\bk'\not=\bk$. Now, 
on the one hand,  for a fixed pair $(\oomega,\oomega')$, we have 
$$
\prod_{\bk\neq{\bk'}  } \exp\Big(\omega_\bk\omega_{\bk'}'b_{\bk{\bk'}}\Big)\leq \exp\big({|S|}^2A^2n\epsilon\big).
$$
On the other hand, if we  are given a sequence of numbers $(b_{\bk\bk})$ indexed by $S$, we have 
$$
\frac1{2^{2|S|}}\sum_{\oomega,\oomega'}\prod_{\bk \in S}e^{\omega_\bk\omega_\bk'b_{\bk \bk}}= 
\prod_{\bk \in S}\frac{e^{b_{\bk\bk}}+e^{-b_{\bk\bk}}}{2}\leq 
\prod_{\bk \in S}e^{b_{\bk\bk}^2}\le \exp\Big({4|S|A^4n^2}\Big).
$$
From these remarks it results that 
\begin{align*}
\int_\RR^d \Big(\frac{p_1}{p_0}\;(\by)\Big)^2p_0(\by)d\!\by  \leq \exp\Big({4|S|A^4n^2}\Big\{1+\frac{|S|\epsilon}{4nA^2}\Big\}\Big),
\end{align*}
and the claim of the lemma follows.

\section{Proof of Proposition~\ref{prop:2}}
Let $M=\binom{d}{d^*}$ and let $\{\sff_0,\sff_1, \ldots, \sff_M\}$ be a set included in $\widetilde\Sigma_L$. 
Let $I_1,\ldots,I_M$ be all the subsets of $\{1,\ldots,d\}$ containing exactly $d^*$ elements somehow enumerated. 
Let us set $\sff_0\equiv 0$ and define $\sff_\ell$, for $\ell\neq 0$, by its Fourier coefficients 
$\{\theta^\ell_\bk:\bk\in\ZZ^d\}$ as follows:  
$$
\theta^\ell_{\bk}=\begin{cases}
1, & \bk=(k_1,\ldots,k_d)=(\1_{1\in I_\ell},\ldots,\1_{d\in I_\ell}),\\
0, & \text{otherwise}.
\end{cases}
$$
Obviously, all the functions $\sff_\ell$ belong to $\Sigma$ and, moreover, each $\sff_\ell$ has $I_\ell$ as sparsity pattern. 
One easily checks that our choice of $\sff_\ell$ implies $\KL(\Pb_{\sff_\ell},\Pb_{\sff_0})=n\|\sff_\ell-\sff_0\|_2^2=n$. Therefore, 
if $\alpha\log M =\alpha\log \binom{d}{d^*}\ge n$, the desired inequality is satisfied. To conclude it suffices 
to note that $\log\binom{d}{d^*}$ is larger than or equal to $d^*\log(d/d^*)=d^*\big(\log d-\log d^*\big)$.
\end{document}